\newcommand{\be}{\begin{eqnarray}}
\newcommand{\ee}{\end{eqnarray}}
\newcommand{\RR}{\mathbb{R}}
\newcommand{\HH}{\mathbb{H}}
\theoremstyle{plain}
\newtheorem{theorem}{\textbf{Theorem}}[section]
\newtheorem{lemma}[theorem]{\textbf{Lemma}}
\newtheorem{proposition}[theorem]{\textbf{Proposition}}
\newtheorem{cor}[theorem]{\textbf{Corollary}}
\newtheorem{rem}[theorem]{\textbf{Remark}}
\theoremstyle{remark}
\newtheorem{ackn}{Acknowledgments\!\!}
\newcommand{\ricc}{\mathrm{Ric}}
\newcommand{\vol}{\mathrm{Vol}}
\title[The Poisson equation on manifolds]
{The Poisson equation \\on Riemannian manifolds with\\ weighted Poincar\'e inequality at infinity}
\author[G. Catino]{Giovanni Catino}
\address[Giovanni Catino]{Dipartimento di Matematica, Politecnico di Milano, Piazza Leonardo da Vinci 32, 20133 Milano, Italy}
\email[]{giovanni.catino@polimi.it}
\author[D. D. Monticelli]{Dario D. Monticelli}
\address[Dario Monticelli]{Dipartimento di Matematica, Politecnico di Milano, Piazza Leonardo da Vinci 32, 20133 Milano, Italy}
\email[]{dario.monticelli@polimi.it}
\author[F. Punzo]{Fabio Punzo}
\address[Fabio Punzo]{Dipartimento di Matematica, Politecnico di Milano, Piazza Leonardo da Vinci 32, 20133 Milano, Italy}
\email[]{fabio.punzo@polimi.it}
\keywords{Poisson equation, Riemannian manifolds, Green's functions, weighted Poincar\'e inequality}
\subjclass[2010]{53C20, 53C25.}
\begin{document}

\begin{abstract}
We prove an existence result for the Poisson equation on non-compact Riemannian manifolds satisfying weighted Poincar\'e inequalities outside compact sets. Our result applies to a large class of manifolds including, for instance, all non-parabolic manifolds with minimal positive Green's function vanishing at infinity. On the source function we assume a sharp pointwise decay depending on the weight appearing in the Poincar\'e inequality and on the behavior of the Ricci curvature at infinity. We do not require any curvature or spectral assumptions on the manifold.
\end{abstract}

\maketitle

\section{Introduction}

The existence of solutions to the Poisson equation
$$
\Delta u = f
$$
on a complete Riemannian manifold $(M, g)$, for a given function $f$ on $M$, is a classical problem which has been the object of deep interest in the literature. Malgrange \cite{mal} obtained solvability of the Poisson equation for any smooth function $f$ with compact support, as a consequence of the existence of a Green's function for $-\Delta$ on every complete Riemannian manifold. Under integrability assumptions on $f$, existence of solutions have been established by Strichartz \cite{str} and Ni-Shi-Tam \cite[Theorem 3.2]{nst} (see also \cite[Lemma 2.3]{ni}). Moreover, in the same paper, the authors proved an existence result for the Poisson problem on manifolds with non-negative Ricci curvature under a sharp integral assumption involving suitable averages of $f$. This condition in particular is satisfied if
$$
|f(x)|\leq \frac{C}{\big(1+r(x)\big)^{\alpha}}
$$
for some $C>0$ and $\alpha>2$, where $r(x):=\operatorname{dist}(x,p)$ is the distance function of any $x\in M$ from a fixed reference point $p\in M$. In fact, they proved a more general result where the decay rate of $f$ is just assumed to be of order $1+\varepsilon$. Note that this result is sharp on the flat space $\RR^{n}$.

From now on let us consider solutions $u$ of the Poisson equation $\Delta u=f$ which can be represented as
$$
u(x)=\int_{M} G(x,y)f(y)\,dy\,,
$$
where $G(x,y)$ is a Green's function of $-\Delta$ on $M$ (see Section \ref{sect-prel} for further details). Muntenau-Sesum \cite{ms} addressed the case of manifolds with positive spectrum, i.e. $\lambda_1(M)>0$, and Ricci curvature bounded from below, obtaining existence of solutions under the pointwise decay assumption
$$
|f(x)|\leq \frac{C}{\big(1+r(x)\big)^{\alpha}}
$$
for some $C>0$ and $\alpha>1$. Note that this result is sharp on $\HH^{n}$. Their proof relies on very precise integral estimates on the minimal positive Green's function, which are inspired by the work of Li-Wang \cite{liwa1}.

In \cite{cmp} the authors generalized the result in \cite{ms}, obtaining existence of solutions on manifolds with positive essential spectrum, i.e. $\lambda_1^{\text{ess}}(M)>0$,  for source functions $f$ satisfying
$$
\sum_{m=1}^{\infty}\frac{\theta_{R}(m+1)-\theta_{R}(m)}{\lambda_{1}\left(M\setminus B_{m}(p)\right)}\sup_{M\setminus B_m(p)}|f| < \infty,
$$
for any $R>0$, where $\theta_{R}(m)$ is a function related to a lower bound on the Ricci curvature, locally on geodesic balls with center $p$ and radius $2R+m$. In particular, the authors showed in \cite[Corollary 1.3]{cmp} existence of solutions on Cartan-Hadamard manifolds with strictly negative Ricci curvature, whenever
$$
-C\big(1+r(x)\big)^{\gamma_{1}} \leq \ricc \leq -\frac{1}{C}\big(1+r(x)\big)^{\gamma_{2}} ,\quad |f (x)| \leq \frac{C}{\big(1+r(x)\big)^{\alpha}},
$$
for some $C>0$ and $\gamma_{1},\gamma_{2}\geq 0$ with $\alpha>1+\frac{\gamma_{1}}{2}-\gamma_{2}$.

Observe that the results in \cite{ms} and \cite{cmp} cannot be used whenever the Ricci curvature tends to zero at infinity fast enough (see \cite{jpw}) since, in this case, one has $\lambda_1^{\text{ess}}(M)=0$ (and so $\lambda_1(M)=0$). In particular the case of $\RR^n$ is not covered. On the other hand, the result in \cite{nst} does not apply on manifolds with negative curvature. The purpose of our paper is to obtain a general result which includes, as special cases, both manifolds with strictly negative curvature and manifolds with Ricci curvature vanishing at infinity. Moreover, our result is sharp on spherically symmetric manifolds, and in particular on $\RR^n$ and $\HH^n$.

Note that the condition $\lambda_1(M)>0$ is equivalent to the validity of the Poincar\'e inequality
$$
\lambda_1(M)\int_M u^2\, dV \leq \int_M |\nabla u|^2\,dV
$$
for any $u\in C^\infty_c(M)$. On the other hand, one has positive essential spectrum if and only if, for some compact subset $K\subset M$, one has $\lambda_1(M \setminus K)>0$ and
$$
\lambda_1(M \setminus K)\int_M u^2\, dV \leq \int_M |\nabla u|^2\,dV
$$
for any $u\in C^\infty_c(M\setminus K)$. Generalizing the previous inequalities, one says that $(M,g)$ satisfies a {\em weighted Poincar\'e inequality}  with a non-negative weight function $\rho$ if
\begin{equation}\label{wpi2}
\int_M \rho \,v^2\, dV \leq \int_M |\nabla v|^2 \,dV
\end{equation}
for every $v\in C^\infty_c(M)$. If for any $R\geq R_0>0$ there exists a non-negative function $\rho_R$ such that \eqref{wpi2} holds for every $v\in C^\infty_c(M\setminus B_R(p))$ and for $\rho\equiv\rho_R$, we say that $(M,g)$ satisfies a {\em weighted Poincar\'e inequality at infinity}. In addition, inspired by \cite{liwa1}, we say that $(M,g)$ satisfies the property $\left(\mathcal{P}^{\infty}_{\rho_R}\right)$ if a weighted Poincar\'e inequality at infinity holds for the family of weights $\rho_R$ and the conformal $\rho_R$-metric defined by
$$
g_{\rho_R} := \rho_R\, g
$$
is complete for every $R\geq R_0$. The validity of a weighted Poincar\'e inequality on some classes of manifolds has been investigated in the literature. It is well known that on $\RR^n$ inequality \eqref{wpi2} holds with $\rho(x)=\frac{(n-2)^2}{4}\frac{1}{r^2(x)}$. It is also called {\em Hardy inequality}. More in general, it holds on every Cartan-Hadamard manifold with $\rho(x)=\frac{C}{r^2(x)}$, for some $C>0$ (see \cite{car} and \cite{gri} for some refinement of this result).

In order to state our main results, we need to introduce a (increasing) function $\omega(s)$ related to the value of the Ricci curvature on the annulus $B_{\frac{5}{4}s}(p)\setminus B_{\frac{3}{4}s}(p)$ (see \eqref{eq127} for the precise definition). In this paper we prove the following result.

\begin{theorem} \label{teo1} Let $(M,g)$ be a complete non-compact Riemannian manifold satisfying the property $\left(\mathcal{P}^\infty_{\rho_R}\right)$ and let $f$ be a locally H\"older continuous function on $M$. If
$$
\sum_{m}^{\infty}\Big(\omega(m+1)-\omega(m)+1\Big)\sup_{M\setminus B_m(p)}\left|\frac{f}{\rho_m}\right| < \infty,
$$
then the Poisson equation
\begin{equation*}
\Delta u=f \quad\hbox{in   } M
\end{equation*}
admits a classical solution $u$.
\end{theorem}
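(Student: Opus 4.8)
The plan is to construct the solution as a limit of solutions of the equation on an exhaustion of $M$ by geodesic balls, with the whole difficulty concentrated in a single uniform integral bound for the relevant Green's functions. Take an exhaustion $B_j:=B_j(p)$, $j\to\infty$, of $M$, let $G_j$ be the positive Green's function of $B_j$, and, for each large $j$, let $u_j$ solve the Dirichlet problem $\Delta u_j=f$ in $B_j$, $u_j=0$ on $\partial B_j$; since $f$ is locally H\"older continuous, classical Schauder theory gives $u_j\in C^{2,\alpha}_{\mathrm{loc}}(B_j)\cap C(\overline{B_j})$, represented by $u_j(x)=\int_{B_j}G_j(x,y)f(y)\,dy$. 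When $M$ is non-parabolic — the case containing all the examples highlighted above, in particular $\RR^n$ with $n\ge 3$ and $\HH^n$ — one has $G_j\uparrow G$, the minimal positive Green's function, and I would take $u(x):=\int_M G(x,y)f(y)\,dy$ as the candidate solution; in the general case, where $M$ may be parabolic (e.g.\ $\RR^2$, which satisfies $\left(\mathcal P^\infty_{\rho_R}\right)$), I would instead work with the normalized functions $\hat u_j:=u_j-u_j(p)$, represented by $\hat u_j(x)=\int_M W_j(x,y)f(y)\,dy$ with $W_j(x,y):=G_j(x,y)-G_j(p,y)$, and all the estimates below would be carried out for $W_j$ in place of $G$.

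The key step will be the a priori bound: \emph{for every compact $K\subset M$ there is $C_K>0$, independent of $j$, with $\int_M G_j(x,y)|f(y)|\,dy\le C_K$ (resp.\ $\int_M|W_j(x,y)|\,|f(y)|\,dy\le C_K$) for all $x\in K$ and all large $j$.} To prove it I would fix $R_0$ as in $\left(\mathcal P^\infty_{\rho_R}\right)$, with $K\subset B_{R_0}$. The contribution of $y\in B_{R_0+1}$ is bounded uniformly in $j$ because $G_j(x,\cdot)$ is dominated by $G(x,\cdot)\in L^1_{\mathrm{loc}}$ and $|f|$ is locally bounded — for $W_j$ one isolates instead its locally integrable singularities at $x$ and $p$. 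The contribution of $y\in M\setminus B_{R_0}$ I would control via the weighted Poincar\'e inequality at infinity, following the method of Li--Wang \cite{liwa1}: testing that inequality for the weight $\rho_m$ against cut-offs of the function $G_j(x,\cdot)$, which is positive and harmonic on $\big(M\setminus B_{R_0}\big)\cap B_j$, running the ensuing Moser-type iteration, and using the completeness of $g_{\rho_m}=\rho_m g$ together with a mean-value inequality and volume comparison on the annulus $B_{\frac{5}{4}m}(p)\setminus B_{\frac{3}{4}m}(p)$ — where the needed lower bound on $\ricc$ is exactly the quantity recorded by $\omega$, see \eqref{eq127} — would yield, uniformly in $j$,
$$
\int_{B_{m+1}(p)\setminus B_m(p)}G_j(x,y)\,\rho_m(y)\,dy\ \le\ C_K\big(\omega(m+1)-\omega(m)+1\big).
$$
Since $|f|\le\rho_m\,\sup_{M\setminus B_m(p)}|f/\rho_m|$ on $M\setminus B_m(p)$, summing over $m$ bounds $\int_{M\setminus B_{R_0}}G_j(x,y)|f(y)|\,dy$ by $C_K$ times the series in the statement, which is finite by hypothesis.

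Granting this, the proof concludes routinely. First, $u_j(x)=\int_{B_j}G_j(x,y)f(y)\,dy\to\int_M G(x,y)f(y)\,dy$ for every $x$, by monotone convergence applied to $f^{\pm}$ and $G_j\uparrow G$ (in the general case the $\hat u_j$ converge likewise along a subsequence, using the a priori bound); and that bound makes $\{u_j\}$ (resp.\ $\{\hat u_j\}$) locally uniformly bounded. Interior Schauder estimates then give a uniform $C^{2,\alpha}$ bound on every compact set, so by Arzel\`a--Ascoli and a diagonal argument one extracts a subsequence converging in $C^2_{\mathrm{loc}}(M)$ to a function $u$; since $\Delta u_j=f$ on $B_j\uparrow M$ (and constants are harmonic, in the general case), $\Delta u=f$ on $M$, and $u\in C^{2,\alpha}_{\mathrm{loc}}(M)$ by the local H\"older continuity of $f$, so $u$ is a classical solution.

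The step I expect to be the genuine obstacle is the uniform annular estimate: one must carry the Li--Wang weighted-Poincar\'e/iteration machinery through \emph{uniformly in the exhaustion}, with the sharp dependence on the weight $\rho_m$ and on the curvature quantity $\omega$ (whose presence also forces one to handle the non-smoothness of $r$ along the cut locus, by the usual barrier, or Calabi, argument), and then to glue this clean exterior estimate to the uniform control near the fixed ball $B_{R_0}$. A secondary, more bookkeeping, difficulty is the possibly parabolic case: there $G_j$ diverges, so one must work with the differences $W_j$ and keep track of the normalizing constants $u_j(p)$, using that $W_j(x,\cdot)$ vanishes on $\partial B_j$ and stays bounded, uniformly in $j$, on compact subsets of $M\setminus\{x,p\}$.
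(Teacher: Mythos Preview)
Your key annular estimate
\[
\int_{B_{m+1}(p)\setminus B_m(p)} G_j(x,y)\,\rho_m(y)\,dy \le C_K\big(\omega(m+1)-\omega(m)+1\big)
\]
is asserted but not proved, and the sketched mechanism (``Moser-type iteration, mean-value inequality and volume comparison on $B_{5m/4}\setminus B_{3m/4}$'') does not do the job. If you test the weighted Poincar\'e inequality with $\sqrt{G_j}\,\phi$ and a \emph{radial} cutoff $\phi$ localized to the annulus, the right-hand side produces $\int \phi^2\,|\nabla G_j|^2/G_j$ and $\int G_j\,|\nabla\phi|^2$; neither is controllable over a geodesic annulus, and you end up bounding the integral on one annulus by the same quantity on its neighbor, which goes nowhere. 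The paper avoids this by decomposing not into geodesic annuli but into \emph{level sets of} $G$, $\mathcal L_p(\delta\varepsilon,\varepsilon)$: the cutoff $\chi$ is then a function of $G$ itself, and the crucial identity $\int_{\mathcal L_p(a,b)}|\nabla G|^2/G=\log(b/a)$ (co-area formula, Lemma~\ref{lemma2}) closes the estimate cleanly, giving Proposition~\ref{claim2}. The role of $\omega$ is also different from what you suggest: it enters not through volume comparison but through Yau's gradient estimate $|\nabla G|\le C\sqrt{Q_R}\,G$ (Corollary~\ref{lemma0}), which integrates along a geodesic to a two-sided pointwise bound $A^{-1}e^{-B\omega}\le G\le Ae^{B\omega}$ (Lemma~\ref{lemma1}); this is what lets one translate level sets of $G$ back into complements of geodesic balls (Lemma~\ref{lemma5}) and is where the factor $\omega(m+1)-\omega(m)$ actually comes from, as $-\log\delta$ for the choice $\delta=e^{B(\omega(m)-\omega(m+1))}$.

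Your treatment of the parabolic case is also off the track the paper takes. There the argument is not via normalized differences $W_j=G_j(x,\cdot)-G_j(p,\cdot)$ but a direct Cauchy--Schwarz: by Lemma~\ref{lemmastoc} one has $\int_{M\setminus B^\rho_R}\rho\,G^2<\infty$, and since every end is parabolic it has finite $\rho$-volume by Li--Wang \cite{liwa2}, so $\int |G|\,|f|\le(\int\rho G^2)^{1/2}(\int\rho(f/\rho)^2)^{1/2}<\infty$. Your outline of the compactness endgame (Schauder, Arzel\`a--Ascoli) is fine, but it rests entirely on the integral bound, and that bound needs the level-set machinery above rather than the geodesic-annulus estimate you propose.
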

Assume that $\lambda_1^{\text{ess}}(M)>0$ and
$$
\ricc \geq -C\big(1+r(x)\big)^{\gamma}
$$
for some $\gamma\geq 0$. Then it is direct to see that
$$
\omega(m+1)-\omega(m)\sim C\Big(\theta_{R}(m+1)-\theta_{R}(m)\Big) \sim C m^{\frac{\gamma}{2}}
$$
for every $R>0$ and the property $\left(\mathcal{P}^\infty_{\rho_R}\right)$ holds for every $R$ with $\rho_R(x)=\lambda_1(M\setminus B_R(p))$. Thus
$$
\Big(\omega(m+1)-\omega(m)+1\Big)\sup_{M\setminus B_m(p)}\left|\frac{f}{\rho_m}\right| \sim  C\, \frac{\theta_{R}(m+1)-\theta_{R}(m)}{\lambda_{1}\left(M\setminus B_{m}(p)\right)}\sup_{M\setminus B_m(p)}|f| \,,
$$
therefore our result is in accordance with those in \cite{ms} and \cite{cmp}.

We recall that by \cite[Corollary 1.4, Lemma 1.5]{liwa1} the validity of a weighted Poincar\'e inequality \eqref{wpi2} on $M$ implies the non-parabolicity of the manifold; on the contrary, if $(M,g)$ is non-parabolic, then a weighted Poincar\'e inequality holds on $M$, with weight $$\rho(x):=\frac{|\nabla G(p,x)|^2}{4 G^2(p,x)},$$ where $G$ is the minimal positive Green's function on $(M,g)$. Exploiting this result, using similar techniques as in Theorem \ref{teo1}, we obtain the following refined result on complete non-compact non-parabolic manifolds.

\begin{theorem} \label{teo2} Let $(M,g)$ be a complete non-compact non-parabolic Riemannian manifold with minimal positive Green's function $G$. Let $\rho(x)=\frac{|\nabla G(p,x)|^2}{4 G^2(p,x)}$ and let $f$ be a locally H\"older continuous function on $M$. If
$$
\sum_{m}^{\infty}\Big(\omega(m+1)-\omega(m)\Big)\sup_{M\setminus B_m(p)}\left|\frac{f}{\rho}\right| < \infty,
$$
then the Poisson equation
\begin{equation*}
\Delta u=f \quad\hbox{in   } M
\end{equation*}
admits a classical solution $u$.
\end{theorem}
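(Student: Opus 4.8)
The plan is to follow the approximation scheme used for Theorem \ref{teo1}, adapted to the distinguished weight $\rho$. First I would fix a smooth exhaustion $B_{R_1}(p)\subset B_{R_2}(p)\subset\cdots$ of $M$ and, for each $j$, solve the Dirichlet problem $\Delta u_j=f$ in $B_{R_j}(p)$ with $u_j=0$ on $\partial B_{R_j}(p)$; this is solvable with $u_j\in C^{2,\alpha}_{\mathrm{loc}}(B_{R_j}(p))\cap C(\overline{B_{R_j}(p)})$ since $f$ is locally H\"older, and $u_j(x)=-\int_{B_{R_j}(p)}G_j(x,y)f(y)\,dy$ where $G_j$ is the Dirichlet Green's function of $-\Delta$ on $B_{R_j}(p)$, so that $0\le G_j\uparrow G$. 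By interior Schauder estimates, to produce a classical solution of $\Delta u=f$ on $M$ it suffices to bound $\{u_j\}$ locally uniformly in $M$, i.e. to exhibit a fixed locally bounded $\Phi$ on $M$ with $|u_j|\le\Phi$ on $B_{R_j}(p)$ for all $j$; then a diagonal subsequence converges in $C^2_{\mathrm{loc}}(M)$ to the desired $u$.

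The core of the proof is this uniform a priori bound. Since $(M,g)$ is non-parabolic, \cite[Corollary 1.4, Lemma 1.5]{liwa1} guarantees that \eqref{wpi2} holds with the weight $\rho=\frac{|\nabla G(p,\cdot)|^2}{4G^2(p,\cdot)}$ and that the conformal metric $g_\rho=\rho\,g$ is complete; in particular \eqref{wpi2} holds for every $v\in C^\infty_c(M\setminus B_R(p))$ and $(M,g)$ satisfies $\left(\mathcal{P}^\infty_{\rho_R}\right)$ with the constant family $\rho_R\equiv\rho$, so that Theorem \ref{teo1} already applies — but only under the stronger hypothesis containing the additional summand ``$+1$''. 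To remove it I would sharpen the estimate by means of a Caccioppoli-type inequality: testing $\Delta u_j=f$ against $u_j\phi^2$ with $\phi$ a Lipschitz cutoff supported in $M\setminus B_R(p)$, integrating by parts and using \eqref{wpi2} on $M\setminus B_R(p)$ together with $\int f u_j\phi^2\le\frac14\int\rho\,u_j^2\phi^2+\int\rho^{-1}f^2\phi^2$, one obtains a $\rho$-weighted energy estimate for $u_j$ on exterior regions in which the source enters only through $\int\rho^{-1}f^2=\int\rho\,(f/\rho)^2$, hence through $\sum_m a_m^2\int_{A_m}\rho\,dV$, where $A_m:=B_{m+1}(p)\setminus B_m(p)$ and $a_m:=\sup_{M\setminus B_m(p)}|f/\rho|$.

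The step that makes the ``$+1$'' superfluous is that for this weight $\int_{A_m}\rho\,dV$ is controlled \emph{exactly}, with no additive loss: since $G(p,\cdot)$ is harmonic off $p$, the flux $\mathcal F$ of $\nabla G(p,\cdot)$ through the level sets $\{G(p,\cdot)=c\}$ is constant, and a co-area computation with $\tau:=-\log G(p,\cdot)$ yields $\int_{\{a\le\tau\le b\}}\frac{|\nabla G(p,\cdot)|^2}{4G(p,\cdot)}\,dV=\frac{\mathcal F}{4}(b-a)$; hence $\int_{A_m}\rho\,dV\le\frac{\mathcal F}{4}\operatorname{osc}_{A_m}\tau$, and by the Cheng--Yau gradient estimate $\rho=\tfrac14|\nabla\log G(p,\cdot)|^2\lesssim$ (lower Ricci bound on $B_{\frac{5}{4}m}(p)\setminus B_{\frac{3}{4}m}(p)$), which is what the definition \eqref{eq127} of $\omega$ packages, this oscillation is $\le C\big(\omega(m+1)-\omega(m)\big)$. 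Thus the source contribution to the energy estimate is dominated by $\sum_m a_m^2\big(\omega(m+1)-\omega(m)\big)^2\le\big(\sum_m a_m(\omega(m+1)-\omega(m))\big)^2<\infty$ by hypothesis, with no ``$+1$'' needed. Feeding the resulting $\rho$-weighted energy decay across the annuli into a local mean-value inequality for $u_j^2$, whose constant again depends only on the lower Ricci bound on the relevant ball and hence on $\omega$, upgrades this to the desired pointwise bound $|u_j|\le\Phi$, uniform in $j$, with $\Phi$ decaying at infinity; the contribution of the inner compact region is absorbed via the maximum principle on $B_{2R}(p)$ against the bounded source $f|_{B_{2R}(p)}$.

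With the uniform bound in hand, a subsequence of $\{u_j\}$ converges in $C^2_{\mathrm{loc}}(M)$ to a classical solution $u$ of $\Delta u=f$, which by monotone convergence $G_j\uparrow G$ may be written as $u(x)=-\int_M G(x,y)f(y)\,dy$. The main obstacle is the uniform a priori bound of the middle steps: one must organize the decay of the $\rho$-weighted energy along the level sets of $G$ — equivalently along the $g_\rho$-distance from a compact set, which tends to infinity by completeness of $g_\rho$ — so that it telescopes cleanly against the series $\sum_m(\omega(m+1)-\omega(m))a_m$, matching the metric annuli $A_m$ (defined through $r$) with the natural level-set decomposition of the weight, all with constants independent of the exhaustion parameter $j$. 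It is precisely here that the exactness of the constant-flux co-area identity for the Li--Wang weight, as opposed to the cruder comparison available for a general $\rho_R$, is essential and lets one dispense with the auxiliary term present in Theorem \ref{teo1}.
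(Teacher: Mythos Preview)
Your approach departs substantially from the paper's, and in doing so introduces both an unjustified claim and an unnecessary detour.

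The paper's proof of Theorem~\ref{teo2} does not pass through energy estimates or Caccioppoli inequalities at all. It simply shows that $\int_M G(p,y)\,|f(y)|\,dy<\infty$ by decomposing into level sets of $G(p,\cdot)$, exactly as in the non-parabolic case of Theorem~\ref{teo1}, but replacing Proposition~\ref{claim2} with the sharper Proposition~\ref{claim3}. That proposition is a two-line computation: for the specific weight $\rho=\frac{|\nabla G|^2}{4G^2}$ one has, on any level-set annulus $\mathcal L_p(\delta\varepsilon,\varepsilon)$,
\[
\int_{\mathcal L_p(\delta\varepsilon,\varepsilon)} G\,|f|\,dy
\;\le\;\sup_{\mathcal L_p(\delta\varepsilon,\varepsilon)}\left|\frac{f}{\rho}\right|\int_{\mathcal L_p(\delta\varepsilon,\varepsilon)} G\,\rho\,dy
\;=\;\frac14\,(-\log\delta)\,\sup_{\mathcal L_p(\delta\varepsilon,\varepsilon)}\left|\frac{f}{\rho}\right|,
\]
the last equality being exactly the co-area identity of Lemma~\ref{lemma2}. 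No cutoffs, no Poincar\'e inequality, no completeness of $g_\rho$ are needed here, and this is why the ``$+1$'' of Proposition~\ref{claim2} disappears: the error term in that proposition came from the cutoff in the $\rho$-distance, which is now entirely bypassed.

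Your route has two concrete problems. First, your assertion that \cite[Corollary 1.4, Lemma 1.5]{liwa1} yields completeness of $g_\rho$ is false in this generality: completeness follows only when $G(p,x)\to 0$ as $r(x)\to\infty$, which Theorem~\ref{teo2} explicitly does \emph{not} assume (see Remark immediately following the statement). Consequently the property $\left(\mathcal P^\infty_{\rho_R}\right)$ is not available, Theorem~\ref{teo1} does not apply even with the ``$+1$'', and your later reference to ``the $g_\rho$-distance from a compact set, which tends to infinity by completeness of $g_\rho$'' is unsupported. Second, the step ``hence $\int_{A_m}\rho\,dV\le\frac{\mathcal F}{4}\operatorname{osc}_{A_m}\tau$'' does not follow from the co-area identity you wrote: that identity controls $\int G\rho$ over a \emph{level-set} region, not $\int\rho$ over a \emph{metric} annulus $A_m$. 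Passing from one to the other would require a uniform lower bound on $G$ over $A_m$, which itself costs a factor $\exp(B\omega(m))$ and destroys the clean telescoping you are aiming for. The paper avoids this entirely by working with $\int G|f|$ (which is what actually needs bounding) directly on level-set regions, where the co-area identity applies without loss.
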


\begin{rem}
We explicitly observe that in Theorem \ref{teo2} the completeness of the conformal metric $g_\rho=\rho g$ is not required. As it was observed in \cite{liwa1}, the completeness of $g_\rho$ would hold if $G(p,x)\to 0$ as $r(x)\to \infty$, a condition that we do not need to assume here.
\end{rem}

It is well-known that $\RR^n$ is a non-parabolic manifold if $n\geq3$, with minimal positive Green's function $G(x,y)=\frac{c_n}{|x-y|^{n-2}}$ for some positive constant $c_n$. Moreover the weighted Poincar\'{e} -- Hardy's inequality holds on $\RR^n$ with  $$\rho(x)=\frac{|\nabla G(0,x)|^2}{4 G^2(0,x)}=\frac{(n-2)^2}{4}\frac{1}{|x|^2}.$$ In this case, using the definition \eqref{eq127} of the function $\omega(s)$, it is easy to see that
$$
\omega(m+1)-\omega(m)\sim C \log \left(1+\frac{1}{m}\right) \sim \frac{C}{m}\,.
$$
Hence we can apply Theorem \ref{teo2}, with
$$
\Big(\omega(m+1)-\omega(m)\Big)\sup_{M\setminus B_m(p)}\left|\frac{f}{\rho_m}\right| \sim C\, m\, \sup_{M\setminus B_m(p)}\left|f\right|
$$
and the convergence of the series follows, whenever $|f(x)|\leq C/(1+r(x))^\alpha$ for some $\alpha>2$. This condition is optimal, as it can be easily verified by explicit computations.  

In general, concerning Cartan-Hadamard manifolds, by using Theorem \ref{teo1} we improve \cite[Corollary 1.3]{cmp} allowing the Ricci curvature to approach zero at infinity.

\begin{cor}\label{cor-2} Let $(M,g)$ be a Cartan-Hadamard manifold and let $f$ be a locally H\"older continuous, bounded function on $M$. If
$$
-C\big(1+r(x)\big)^{\gamma_{1}} \leq \ricc \leq -\frac{1}{C}\big(1+r(x)\big)^{\gamma_{2}} ,\quad |f (x)| \leq \frac{C}{\big(1+r(x)\big)^{\alpha}},
$$
for some $C\geq 1$, $\gamma_1,\gamma_2\in\RR$, $\gamma_{1}\geq\gamma_{2}$, $\gamma_1\geq0$ and $\alpha$ satisfying
$$
\alpha >
\begin{cases}
1+\frac{\gamma_1}{2}-\gamma_2 &\quad\hbox{if } \gamma_2\geq-2 \\
3+\frac{\gamma_1}{2}  &\quad\hbox{if } \gamma_2< -2 %\\
%2 &\quad\hbox{if } \gamma_1<-2, \gamma_2< -2
\end{cases}
$$
then the Poisson equation
\begin{equation*}
\Delta u=f \quad\hbox{in   } M
\end{equation*}
admits a classical solution $u$.
\end{cor}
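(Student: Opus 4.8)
The plan is to deduce Corollary~\ref{cor-2} from Theorem~\ref{teo1} in three steps: (a) show that a Cartan--Hadamard manifold with the prescribed Ricci upper bound satisfies property $\left(\mathcal{P}^\infty_{\rho_R}\right)$ for an explicit family of weights $\rho_R$; (b) estimate the increments $\omega(m+1)-\omega(m)$ from the Ricci lower bound; (c) insert the decay of $f$ into the summability hypothesis of Theorem~\ref{teo1}. Throughout, recall that a Cartan--Hadamard manifold is complete and non-compact, and that the Hessian comparison theorem gives $\Delta r\ge\frac{n-1}{r}$ and forces the shape operator $S$ of the geodesic spheres to be positive semidefinite, so that $|S|^2\le(\Delta r)^2$.

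Step (a) splits according to whether $\gamma_2\ge-2$ or $\gamma_2<-2$, as in the statement. If $\gamma_2\ge-2$ I claim $\left(\mathcal{P}^\infty_{\rho_R}\right)$ holds with $\rho_R(x)=\varepsilon\,(1+r(x))^{\gamma_2}$ for some $\varepsilon>0$ independent of $R$ and all large $R$. For $\gamma_2=-2$ this is (a small multiple of) the classical Hardy inequality on Cartan--Hadamard manifolds recalled in the Introduction; for $\gamma_2>-2$ it comes from the curvature bound: plugging $|S|^2\le(\Delta r)^2$ and $\ricc\le-\frac1C(1+r)^{\gamma_2}$ into the trace Riccati identity $(\Delta r)'=-|S|^2-\ricc(\nabla r,\nabla r)$ gives $(\Delta r)'\ge-(\Delta r)^2+\frac1C(1+r)^{\gamma_2}$, and since $\Delta r\sim(n-1)/r\to\infty$ near $p$, an ODE comparison with the barrier $r\mapsto\mu(1+r)^{\gamma_2/2}$ yields $\Delta r\ge\mu(1+r)^{\gamma_2/2}$ outside a large ball, for some $\mu\in(0,1/\sqrt C)$. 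Feeding this into the standard ground-state substitution $\int_M|\nabla v|^2\,dV\ge\int_M(-\Delta\phi-|\nabla\phi|^2)v^2\,dV$ (valid for $v\in C^\infty_c(M\setminus B_R)$, by expanding $\int_M|\nabla v-v\nabla\phi|^2\ge0$) with the radial profile $\phi$ determined by $\phi'(r)=-\beta\mu(1+r)^{\gamma_2/2}$, $\beta\in(0,1)$ small, produces $-\Delta\phi-|\nabla\phi|^2\ge\varepsilon\,(1+r)^{\gamma_2}$ on $M\setminus B_R$ for $R$ large — here one uses $\gamma_2>-2$, so that the $\phi''$ contribution, of order $(1+r)^{\gamma_2/2-1}$, is negligible against $(1+r)^{\gamma_2}$. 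Completeness of $g_{\rho_R}=\rho_R g$ holds because $\int^\infty(1+s)^{\gamma_2/2}\,ds=\infty$ exactly when $\gamma_2\ge-2$. If instead $\gamma_2<-2$ this weight would make $g_{\rho_R}$ incomplete, so I drop the curvature gain and take $\rho_R(x)=\frac{(n-2)^2}{4\,r(x)^2}$ (assuming $n\ge3$), for which the Hardy inequality holds on $C^\infty_c(M\setminus B_R)$ and $g_{\rho_R}$ is complete since $\int^\infty s^{-1}\,ds=\infty$. (Alternatively, step (a) may be quoted from \cite{car,gri}.)

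For step (b), on $B_{\frac54 s}(p)\setminus B_{\frac34 s}(p)$ one has $\ricc\ge-C(1+\tfrac54 s)^{\gamma_1}\ge-C'(1+s)^{\gamma_1}$, so the definition \eqref{eq127} of $\omega$ gives $\omega(m+1)-\omega(m)\le C''(1+m)^{\gamma_1/2}$ (this is the computation sketched right after Theorem~\ref{teo1}), hence $\omega(m+1)-\omega(m)+1\le C'''(1+m)^{\gamma_1/2}$ since $\gamma_1\ge0$. For step (c), when $\gamma_2\ge-2$ the bound $|f|\le C(1+r)^{-\alpha}$ together with $\alpha>1+\frac{\gamma_1}{2}-\gamma_2\ge1-\gamma_2$ (so that $\alpha+\gamma_2>0$) gives
$$
\sup_{M\setminus B_m(p)}\left|\frac{f}{\rho_m}\right|\le\frac{C}{\varepsilon}\,(1+m)^{-\alpha-\gamma_2},
$$
so the $m$-th term of the series in Theorem~\ref{teo1} is $\lesssim(1+m)^{\gamma_1/2-\alpha-\gamma_2}$, and the series converges precisely for $\alpha>1+\frac{\gamma_1}{2}-\gamma_2$; when $\gamma_2<-2$ one has $\rho_m\sim(1+r)^{-2}$ and $\alpha>3+\frac{\gamma_1}{2}>2$, so the $m$-th term is $\lesssim(1+m)^{\gamma_1/2+2-\alpha}$ and the series converges precisely for $\alpha>3+\frac{\gamma_1}{2}$. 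In either case the hypothesis on $\alpha$ gives convergence, and since $f$ is locally H\"older continuous, Theorem~\ref{teo1} produces a classical solution of $\Delta u=f$ on $M$.

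I expect step (a) to be the main obstacle: one is given only a Ricci \emph{upper} bound, which — unlike a sectional curvature upper bound — does not by itself control $\Delta r$ from below, and the point that rescues the argument is the inequality $|S|^2\le(\Delta r)^2$, special to Cartan--Hadamard manifolds, which lets the Ricci bound enter the trace Riccati equation. The secondary subtlety is the threshold $\gamma_2=-2$: it is dictated by the completeness of the conformal metric $\rho_R g$ required in $\left(\mathcal{P}^\infty_{\rho_R}\right)$, and it is exactly this that produces the two ranges of $\alpha$ in the statement.
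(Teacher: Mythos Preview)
Your proposal is correct and follows the same three--step reduction to Theorem~\ref{teo1} as the paper: establish the property $\left(\mathcal{P}^\infty_{\rho_R}\right)$ with a power weight determined by the Ricci \emph{upper} bound, estimate $\omega(m+1)-\omega(m)$ from the Ricci \emph{lower} bound, and check summability. Steps (b) and (c) match the paper's computations almost verbatim, including the two ranges for $\gamma_2$ and the role of $\gamma_1\ge0$ in absorbing the ``$+1$''.

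The only genuine difference is in how you carry out step~(a). The paper packages this as a separate lemma (their Lemma~\ref{lemma-peso}): it obtains the lower bound on $\Delta r$ by quoting a ``strong Laplacian comparison'' from Xin's book, and then deduces the weighted Poincar\'e inequality by a direct integration--by--parts when $\gamma_2\le0$ and by a Barta--type estimate combined with a dyadic partition of unity when $\gamma_2>0$. You instead derive the Laplacian lower bound by hand from the trace Riccati identity together with the Cartan--Hadamard fact $|S|^2\le(\operatorname{tr} S)^2=(\Delta r)^2$, and then convert it into the weighted inequality via the Agmon/ground--state substitution $\int|\nabla v|^2\ge\int(-\Delta\phi-|\nabla\phi|^2)v^2$ with a radial $\phi$. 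Your route is a bit more self--contained (no external citation for the comparison) and handles all signs of $\gamma_2>-2$ uniformly, whereas the paper's route is shorter once the comparison is granted but needs the $\gamma_2\le0$ / $\gamma_2>0$ split. The threshold $\gamma_2=-2$, coming from the completeness of $g_{\rho_R}$, appears the same way in both arguments, and your observation that the Hardy weight for $\gamma_2<-2$ implicitly uses $n\ge3$ is accurate; the paper's argument in that range has the same hidden dimensional constraint.
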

%***QUESTO COROLLARIO E' DA RICONTROLLARE SOLO PER $\gamma_1<0$***
\begin{rem}\label{rem-rot}
In the special case $\gamma_{1}=\gamma_{2}=\gamma\geq 0$ the condition on $\alpha$ in the previous corollary becomes
$$
\alpha >
\begin{cases}
1-\frac{\gamma}{2}&\quad\hbox{if } \gamma\geq-2 \\
2 &\quad\hbox{if } \gamma< -2\,.
\end{cases}
$$
In particular in $(M,g)$ is the standard hyperbolic space $\HH^n$, then $\gamma=0$. Thus we need that $\alpha>1$ and this condition is sharp as observed above. We will consider also the case $\gamma<0$ in the Subsection \ref{ssu} on model manifolds.
\end{rem}

The paper is organized as follows: in Section \ref{sect-prel} we collect some preliminary results and we define precisely the function $\omega$; in Section \ref{sec-grad} we prove a refined local gradient estimates for positive harmonic functions; in Section \ref{sec-est} we prove key estimates on the positive minimal Green's function $G(x,y)$ of a non-parabolic manifold, by means of the property $\left(\mathcal{P}^{\infty}_{\rho_R}\right)$; in Section \ref{sec-proofs} we prove Theorem \ref{teo1}; finally in Section \ref{sec-ex} we prove Corollary \ref{cor-2} and show the optimality of the assumption in Theorem \ref{teo2} for rotationally symmetric manifolds.

\

Finally we note that some results concerning the Poisson equation on some manifolds satisfying a weighted Poincar\'e inequality have been very recently obtained in \cite{msw2}. However their assumptions and results apparently are completely different to ours.

\

\section{Preliminaries}

\label{sect-prel}

Let $(M,g)$ be a complete non-compact $n$-dimensional Riemannian manifold. For any $x\in M$ and $R>0$, we denote by $B_{R}(x)$ the geodesic ball of radius R with centre $x$ and let $\vol(B_{R}(x))$ be its volume. We denote by $\ricc$ the Ricci curvature of $g$. For any $x \in M$, let $\mu(x)$ be the smallest eigenvalue of $\ricc$ at $x$. Thus, for any $V\in T_{x}M$ with $|V|=1$, $\ricc(V,V)(x) \geq \mu(x)$ and we have $\mu(x)\geq -\omega(r(x))$ for some $\omega\in C([0,\infty))$, $\omega\geq 0$. Hence, for any $x\in M$, we have
\begin{equation}\label{eq3}
\ricc(V,V)(x) \geq -(n-1) \frac{\varphi''(r(x))}{\varphi(r(x))},
\end{equation}
for some $\varphi\in C^{\infty}((0,\infty))\cap C^{1}([0,\infty))$ with $\varphi(0)=0$ and $\varphi'(0)=1$. Note that $\varphi,\varphi',\varphi''$ are positive in $(0,\infty)$. We set
$$
K_R(x):=\sup_{y\in B_{r(x)+R}\setminus B_{r(x)-R}}\frac{\varphi''(r(y))}{\varphi(r(y))}
$$
for $r(x)>R>1$;
$$
I_R(x):=\begin{cases}
\sqrt{K_R(x)}\coth\left(\sqrt{K_R(x)} R/2\right)&\text{if }\,K_R(x)>0 \\
\frac{2}{R} &\text{if }\,K_R(x)=0;
\end{cases}
$$
\begin{align}\label{defQ}
Q_{R}(x):=\max\left\{K_R(x), \frac{I_R(x)}{R}, \frac{1}{R^2}\right\}.
\end{align}
Note that $Q_{R}(x)\equiv Q_{R}(r(x))$. For any $z\in M$, let $\gamma$ be the minimal geodesic connecting $p$ to $z$. We define the function
\begin{equation}\label{eq127}
\omega(z)=\omega(r(z)):=\int_a^{r(z)} \sqrt{Q_{\frac{r((\gamma(s))}{4}}(r(\gamma(s))}\,ds,
\end{equation}
for a given $a>0$. Note that $t\mapsto\omega(t)$ is increasing and so invertible.

Under $\eqref{eq3}$, we know that
\begin{equation}\label{eq6}
\vol(B_{R}(p)) \leq C \int_{0}^{R}\varphi^{n-1}(\xi)\,d\xi.
\end{equation}
Moreover, let $\operatorname{Cut}(p)$ be the {\em cut locus} of $p\in M$.

It is known that every complete Riemannian manifold admits a Green's function (see \cite{mal}), i.e. a smooth function defined in $(M\times M)\setminus \{(x,y)\in M\times M:\,x=y\} $ such that $G(x,y)=G(y,x)$ and $\Delta_{y} G(x,y)=-\delta_{x}(y)$. We say that $(M,g)$ is non-parabolic if there exists a minimal positive Green's function $G(x,y)$ on $(M,g)$, and parabolic otherwise.

We say that $(M,g)$ satisfies a {\em weighted Poincar\'e inequality}  with a non-negative weight function $\rho$ if
\begin{equation}\label{wpi}
\int_M \rho \,v^2\, dV \leq \int_M |\nabla v|^2 \,dV
\end{equation}
for every $v\in C^\infty_c(M)$.  If for any $R\geq R_0>0$ there exists a non-negative function $\rho_R$ such that \eqref{wpi2} holds for every $v\in C^\infty_c(M\setminus B_R(p))$ and for $\rho\equiv\rho_R$, we say that $(M,g)$ satisfies a {\em weighted Poincar\'e inequality at infinity}. In addition, inspired by \cite{liwa1}, we say that $(M,g)$ satisfies the property $\left(\mathcal{P}^{\infty}_{\rho_R}\right)$ if a weighted Poincar\'e inequality at infinity holds for the family of weights $\rho_R$ and the conformal $\rho_R$-metric defined by
$$
g_\rho := \rho_R\, g
$$
is complete.  With this metric we consider the $\rho$-distance function
$$
r_\rho (x,y)=\inf_{\gamma} \, l_\rho (\gamma)
$$
where the infimum of the lengths  is taken over all curves joining $x$ and $y$, with respect to the metric $g_\rho$. For a fixed point $p\in M$, we denote by $$r_\rho(x) = r_\rho (p,x).$$ Note that $|\nabla r_\rho (x)|^2 = \rho(x)$. Finally, we denote by $$B^\rho_R(p)=\{x \in M: r_\rho(x)\leq R\}.$$ 

Let $\lambda_{1}(M)$ be the bottom of the $L^{2}$-spectrum of $-\Delta$. It is known that $\lambda_{1}(M)\in[0,+\infty)$ and it is given by the variational formula
$$
\lambda_{1}(M) = \inf_{v\in C^{\infty}_{c}(M)}\frac{\int_{M}|\nabla v|^{2}\,dV}{\int_{M}v^{2}\,dV}\,.
$$
If $\lambda_{1}(M)>0$, then $(M,g)$ is non-parabolic (see \cite[Proposition 10.1]{gri1}). Whenever $(M,g)$ is non-parabolic, let $G_{R}(x,y)$ be the Green's function of $-\Delta$ in $B_{R}(z)$ satisfying zero Dirichlet boundary conditions on $\partial B_{R}(z)$, for some $z\in M$. We have that $R\mapsto G_{R}(x,y)$ is increasing and, for any $x,y\in M$,
\begin{equation}\label{eq9}
G(x,y) = \lim_{R\to\infty} G_{R}(x,y),
\end{equation}
locally uniformly in $(M\times M)\setminus \{(x,y)\in M\times M:\,x=y\} $.
We define $\lambda_{1}(\Omega)$, with $\Omega$ an open subset of $M$, to be the first eigenvalue of $-\Delta$ in $\Omega$ with zero Dirichlet boundary conditions. It is well known that $\lambda_{1}(\Omega)$ is decreasing with respect to the inclusion of subsets. In particular $R\mapsto\lambda_{1}(B_{R}(x))$ is decreasing and $\lambda_{1}(B_{R}(x))\to \lambda_{1}(M)$ as $R\to\infty$.

%Another interesting quantity of $(M,g$), which we denote by $\laess(M)$, is the greatest lower bound of the essential spectrum of $-\Delta$, which consists of points of the spectrum of $-\Delta$ which are either accumulation points of points on the spectrum or which correspond to discrete eigenvalues of $-\Delta$ with infinite multiplicity. It is classical that if $(M,g)$ is compact, the essential spectrum is empty. We also have $\lambda_{1}(M)\leq \laess(M)$ and
%$$
%\laess(M)=\sup_{K} \lambda_{1}(M\setminus K),
%$$
%where $K$ runs through all compact subsets of $M$.

\

For any $x\in M$, for any $s>0$ and for any $0\leq a < b\leq +\infty$, we define
\begin{align*}
\mathcal{L}_{x}(s) &:= \{y \in M\,:\,G(x,y)=s \},\\
\mathcal{L}_{x}(a,b)&:= \{y \in M\,:\, a< G(x,y)< b \}.
\end{align*}

\

\section{Local gradient estimate for harmonic functions} \label{sec-grad}

In this section we improve \cite[Lemma 3.1]{cmp}. We set

$$
k_R(z):=\sup_{B_R(z)}\frac{\varphi''(r(y))}{\varphi(r(y))}
$$
for $z\in M$ and $R>0$;
$$
i_R(z):=\begin{cases}
\sqrt{k_R}\coth\left(\sqrt{k_R(z)} R/2\right)&\text{if }\,k_R(z)>0 \\
\frac{2}{R} &\text{if }\,k_R(z)=0.
\end{cases}
$$
\begin{lemma}\label{lemma00} Let $R>0$ and $z\in M$. Let $u\in C^{2}(B_{R}(z))$ be a positive harmonic function in $B_{R}(z)$. Then
$$
|\nabla u(\xi)| \leq C \sqrt{\max\left\{k_R(z), \frac{i_R(z)}{R}, \frac{1}{R^2}\right\}}\, u(\xi)\quad\text{for any}\quad \xi\in B_{R/2}(z),
$$
for some positive constant $C>0$.
\end{lemma}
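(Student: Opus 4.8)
The plan is to run a Cheng--Yau / Li--Yau type gradient estimate based on the Bochner formula, localized by a cut-off adapted to the annulus $B_R(z)\setminus B_{R/2}(z)$. Since $u>0$ is harmonic, $v:=\log u$ is smooth on $B_R(z)$ and $\Delta v=-|\nabla v|^2$. Set $w:=|\nabla v|^2=|\nabla u|^2/u^2$. Applying the Bochner formula to $v$, using $|\nabla^2 v|^2\ge(\Delta v)^2/n=w^2/n$, the identity $\langle\nabla v,\nabla\Delta v\rangle=-\langle\nabla v,\nabla w\rangle$, and the curvature bound \eqref{eq3}, which on $B_R(z)$ gives $\ricc(\nabla v,\nabla v)\ge-(n-1)k_R(z)\,w$, one obtains
\begin{equation*}
\tfrac12\,\Delta w\;\ge\;\tfrac1n\,w^2-\langle\nabla v,\nabla w\rangle-(n-1)k_R(z)\,w\qquad\text{in }B_R(z).
\end{equation*}

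Next I would fix a cut-off $\phi=\psi(r(\cdot,z))$ with $\psi\equiv1$ on $[0,R/2]$, $\psi\equiv0$ on $[R,+\infty)$, $0\le\psi\le1$, $\psi'\le0$, $|\psi'|^2\le CR^{-2}\psi$ and $\psi''\ge-CR^{-2}$. The Laplacian comparison theorem under \eqref{eq3}, restricted to the region $\{R/2\le r\le R\}$ where $\nabla\phi$ is supported, gives $\Delta r\le(n-1)\,i_R(z)$ there (using the monotonicity of $\coth$ and the value $i_R(z)$ in the case $k_R(z)=0$), whence $|\nabla\phi|^2\le CR^{-2}\phi$ and $\Delta\phi\ge-C\big(i_R(z)/R+R^{-2}\big)$ on $B_R(z)$. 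Across $\operatorname{Cut}(z)$ these inequalities are understood in the barrier sense, and the maximum principle argument below is made rigorous by Calabi's trick (moving the base point $z$ slightly along the minimal geodesic to the point under consideration), which is legitimate precisely because $\psi$ is non-increasing.

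Then I apply the maximum principle to $\phi w$ on $\overline{B_R(z)}$. Let $x_0$ realize its maximum; we may assume $F:=(\phi w)(x_0)>0$. At $x_0$ one has $\phi\,\nabla w=-w\,\nabla\phi$ and $\Delta(\phi w)\le0$. Expanding $\Delta(\phi w)=\phi\,\Delta w+2\langle\nabla\phi,\nabla w\rangle+w\,\Delta\phi$, substituting the Bochner inequality, eliminating $\nabla w$ via the critical point relation (so that $-2\phi\langle\nabla v,\nabla w\rangle=2w\langle\nabla v,\nabla\phi\rangle\ge-\tfrac{C}{R}\phi^{1/2}w^{3/2}$ and $2\langle\nabla\phi,\nabla w\rangle=-2w\phi^{-1}|\nabla\phi|^2\ge-\tfrac{C}{R^2}w$), and finally multiplying the resulting inequality by $\phi(x_0)$ — which is what renders the term $|\nabla\phi|^2/\phi$ harmless and turns everything into powers of $F$ — one arrives at
\begin{equation*}
\tfrac2n\,F^2\;\le\;\tfrac{C}{R}\,F^{3/2}+C\Big(k_R(z)+\tfrac{i_R(z)}{R}+\tfrac1{R^2}\Big)F.
\end{equation*}
Dividing by $F$ and absorbing $\tfrac{C}{R}F^{1/2}$ by Young's inequality yields $F\le C\max\{k_R(z),\,i_R(z)/R,\,R^{-2}\}$. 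Since $\phi\equiv1$ on $B_{R/2}(z)$ and $\phi w\le F$ everywhere, $w\le C\max\{k_R(z),i_R(z)/R,R^{-2}\}$ on $B_{R/2}(z)$, which is the claim after recalling $w=|\nabla u|^2/u^2$.

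I expect the main difficulty to be the bookkeeping: keeping the three scales $k_R(z)$, $i_R(z)/R$ and $R^{-2}$ separated throughout — in the Laplacian comparison step, in the choice of $\psi$, and in solving the final quadratic inequality — rather than collapsing them into a single cruder quantity, since obtaining exactly $\max\{k_R(z),i_R(z)/R,R^{-2}\}$ is the refinement over \cite[Lemma 3.1]{cmp}. A secondary, routine but delicate, point is the rigorous treatment of the cut locus, for which Calabi's argument (or a viscosity/barrier formulation of the comparison inequality) is enough.
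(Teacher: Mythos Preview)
Your proposal is correct and follows essentially the same Cheng--Yau argument as the paper: Bochner formula applied to $v=\log u$, a radial cut-off controlled via Laplacian comparison (yielding the $i_R(z)/R$ term), and a maximum-principle computation producing the quadratic inequality in $F$, with Calabi's trick handling the cut locus. The only cosmetic difference is that the paper packages the cut-off into $w=\eta^2|\nabla v|^2$ from the outset and expands $\Delta w$ directly, whereas you keep $w=|\nabla v|^2$ and multiply by $\phi$ at the maximum point; the resulting inequalities and the final bound are the same.
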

\begin{proof}
Following the classical argument of Yau, let $v:=\log u$. Then
$$
\Delta v = - |\nabla v|^{2} .
$$
Let $\eta(\xi)=\eta(d(\xi))$, with $d(\xi):=\operatorname{dist}(\xi,z)$, a smooth cutoff function such that $\eta(\xi)\equiv 1$ on $B_{R/2}(z)$, with support in $B_{R}(z)$, $0\leq \eta\leq 1$ and
$$-\frac{4}{R}\leq \frac{\eta'}{\eta^{1/2}} \leq 0 \quad\text{and}\quad \frac{|\eta''|}{\eta} \leq \frac{8}{R^{2}}.$$ Let $w=\eta^{2}|\nabla v|^{2}$. Then
\begin{align*}
\frac12 \Delta w &= \frac12 \eta^{2} \Delta |\nabla v|^{2} + \frac12 |\nabla v|^{2} \Delta \eta^{2} + \langle \nabla |\nabla v|^{2},\nabla \eta^{2}\rangle.
\end{align*}
Then, from classical Bochner-Weitzenb\"och formula and Newton inequality, one has
\begin{align*}
\frac12 \Delta |\nabla v|^{2} & =  |\nabla^{2} v|^{2} + \ricc(\nabla v,\nabla v) + \langle \nabla v,\nabla \Delta v\rangle \\
&\geq \frac1n (\Delta v)^{2} - (n-1) \frac{\varphi''}{\varphi} |\nabla v|^{2} - \langle \nabla |\nabla v|^{2},\nabla v\rangle \\
&= \frac1n |\nabla v|^{4} - (n-1) \frac{\varphi''}{\varphi} |\nabla v|^{2} - \langle \nabla |\nabla v|^{2},\nabla v\rangle .
\end{align*}
Moreover, by Laplacian comparison, since $\ricc\geq -(n-1)k_R(z)$ in $B_R(z)$, we have
\begin{align*}
\frac12 \Delta \eta^{2} &= \eta \eta' \Delta \rho + \eta \eta'' + (\eta')^{2} \\
&\geq (n-1)i_R(z)\eta\eta' + \eta \eta''+ (\eta')^{2}\\
&\geq -\frac{4}{R} \left((n-1)i_R(z)+\frac{2}{R}\right)\eta
\end{align*}
pointwise in $B_{R}(z)\setminus (\{z\}\cup \operatorname{Cut}(z))$ and weakly on $B_{R}(z)$. Thus,
\begin{align*}
\frac12 \Delta w &\geq \frac1n \frac{w^{2}}{\eta^{2}}-(n-1)\frac{\varphi''}{\varphi}w - \frac{4}{R}\left((n-1)i_R(z)+\frac{2}{R}\right)\frac{w}{\eta} \\
&-4\frac{|\eta'|^{2}}{\eta^{2}}w + \frac{2}{\eta}\langle \nabla w,\nabla \eta\rangle-\langle \nabla w,\nabla v\rangle + \frac{2}{\eta}\langle \nabla v,\nabla \eta \rangle w \\
&\geq \frac1n \frac{w^{2}}{\eta^{2}}-(n-1)\frac{\varphi''}{\varphi}w - \frac{4}{R}\left((n-1)i_R(z)+\frac{2}{R}\right)\frac{w}{\eta} \\
& + \frac{2}{\eta}\langle \nabla w,\nabla \eta\rangle-\langle \nabla w,\nabla v\rangle - \frac{64}{R^{2}}\frac{ w}{\eta}-\frac{8}{R}\frac{ w^{3/2}}{\eta^{3/2}}\\
&\geq \frac{1}{2n} \frac{w^{2}}{\eta^{2}}-(n-1)\frac{\varphi''}{\varphi}w - \frac{4}{R}\left((n-1)i_R(z)+\frac{18+8n}{R}\right)\frac{w}{\eta} \\
& + \frac{2}{\eta}\langle \nabla w,\nabla \eta\rangle-\langle \nabla w,\nabla v\rangle.
\end{align*}
Let $q$ be a maximum point of $w$ in $\overline{B}_{R}(z)$. Since $w\equiv0$ on $\partial B_{R}(z)$, we have  $q\in B_{R}(z)$. First assume $q\notin \operatorname{Cut}(z)$. At $q$, we obtain
\begin{align*}
0 &\geq \left[\frac{1}{2n} w - (n-1)\frac{\varphi''}{\varphi}-\frac{4}{R}\Big((n-1)i_R(z)+\frac{18+8n}{R}\Big)\right]w.
\end{align*}
So
$$
w(q)\leq 2n(n-1)\frac{\varphi''\big(r(q)\big)}{\varphi\big(r(q)\big)}+\frac{8n(n-1)}{R}i_R(z)+\frac{144n+64n^2}{R^2}.
$$
Thus, for any $\xi \in B_{R/2}(z)$,
\begin{align*}
|\nabla v(\xi)|^{2}&\leq 2n(n-1)\frac{\varphi''\big(r(q)\big)}{\varphi\big(r(q)\big)}+\frac{8n(n-1)}{R}i_R(z)+\frac{144n+64n^2}{R^2}\\
&\leq 2n(n-1)k_R(z)+\frac{8n(n-1)}{R}i_R(z)+\frac{144n+64n^2}{R^2}
\end{align*}
We get
$$
\frac{|\nabla u(\xi)|}{u(\xi)}=|\nabla v(\xi)| \leq C \sqrt{\max\left\{k_R(z), \frac{i_R(z)}{R}, \frac{1}{R^2}\right\}}.
$$
 for some positive constant $C>0$. By standard Calabi trick (see \cite{cal, cy}), the same estimate can be obtained when $q\in \operatorname{Cut}(z)$. This concludes the proof of the lemma.

\end{proof}

As a corollary we have the following

\begin{cor}\label{lemma0} Let $(M,g)$ be non-parabolic. If $r(z)>R>0$, then
$$
|\nabla G(p,z)| \leq C \sqrt{Q_{R}(z)}\, G(p,z),
$$
for some positive constant $C>0$.
\end{cor}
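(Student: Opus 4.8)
The plan is to deduce the estimate directly from Lemma \ref{lemma00} applied to the function $u=G(p,\cdot)$ on the ball $B_R(z)$, after checking that the lemma applies there and that the quantity under the square root in its conclusion is controlled by $Q_R(z)$.

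First I would check applicability. Since $r(z)>R$, every $\xi\in\overline{B_R(z)}$ satisfies $r(\xi)\geq \dist(z,p)-\dist(z,\xi)\geq r(z)-R>0$, so $p\notin\overline{B_R(z)}$; hence $G(p,\cdot)$ is smooth, positive and harmonic on $B_R(z)$, and Lemma \ref{lemma00} applies with this choice of $R$. Evaluating its conclusion at the centre $\xi=z\in B_{R/2}(z)$ gives
$$
|\nabla G(p,z)|\leq C\sqrt{\max\Big\{k_R(z),\tfrac{i_R(z)}{R},\tfrac{1}{R^2}\Big\}}\;G(p,z).
$$

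Next I would compare the local quantities $k_R(z),i_R(z)$ appearing here with the global ones $K_R(z),I_R(z)$ entering the definition of $Q_R(z)$. The triangle inequality, together with $r(z)>R$, yields the inclusion $B_R(z)\subseteq B_{r(z)+R}(p)\setminus B_{r(z)-R}(p)$, whence $k_R(z)\leq K_R(z)$ straight from the definitions. To upgrade this to $i_R(z)\leq I_R(z)$ one only needs the monotonicity of the function $t\mapsto \sqrt{t}\coth\!\big(\tfrac R2\sqrt t\big)$ on $[0,\infty)$, with its value at $t=0$ understood as $2/R=\lim_{t\to0^+}\sqrt t\coth(\tfrac R2\sqrt t)$; this is a short calculus computation, the derivative having the sign of $\sinh x\cosh x-x\geq0$. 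Combining $k_R(z)\leq K_R(z)$, $i_R(z)\leq I_R(z)$ and the trivial $1/R^2\leq 1/R^2$, one obtains
$$
\max\Big\{k_R(z),\tfrac{i_R(z)}{R},\tfrac{1}{R^2}\Big\}\leq \max\Big\{K_R(z),\tfrac{I_R(z)}{R},\tfrac{1}{R^2}\Big\}=Q_R(z),
$$
and substituting this into the previous display gives the claim.

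I do not expect any genuine difficulty here. The only two points that need a little care are choosing the radius of the ball in Lemma \ref{lemma00} to be exactly $R$, so that $B_R(z)$ sweeps out precisely the annulus $B_{r(z)+R}(p)\setminus B_{r(z)-R}(p)$ used in the definition of $K_R$, and the monotonicity of $\sqrt t\coth(\tfrac R2\sqrt t)$ that transfers the curvature comparison from $k_R$ to $i_R$; both are routine.
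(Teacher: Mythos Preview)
Your argument is correct and is exactly the derivation the paper has in mind: the corollary is stated without proof, as an immediate consequence of Lemma \ref{lemma00} applied to $G(p,\cdot)$ on $B_R(z)$, together with the inclusion $B_R(z)\subset B_{r(z)+R}(p)\setminus B_{r(z)-R}(p)$. Your extra care in verifying $i_R(z)\le I_R(z)$ via the monotonicity of $t\mapsto\sqrt t\coth(\tfrac{R}{2}\sqrt t)$ is the only nontrivial point, and you handle it correctly.
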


\

\section{Green's function estimates} \label{sec-est}

\subsection{Pointwise estimate}

\begin{lemma}\label{lemma1} Let $(M,g)$ be non-parabolic and let $a>0$ and $y\in M\setminus B_{a}(p)$. Then
$$
A^{-1} \exp \left(-B\, \omega(y)\right) \leq G(p,y) \leq A \exp \left(B\,  \omega(y)\right),
$$
with $A:=\max\{ \max_{\partial B_a(p)}G(p,\cdot), \left(\min_{\partial B_a(p)}G(p,\cdot)\right)^{-1}\}$ and $B=2n(n-1)$.
\end{lemma}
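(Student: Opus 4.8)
The plan is to obtain both inequalities at once by integrating the logarithmic gradient estimate for $G(p,\cdot)$ along a minimal geodesic issuing from $p$. Fix $a>0$ and $y\in M\setminus B_a(p)$, and let $\gamma\colon[0,r(y)]\to M$ be a unit-speed minimal geodesic from $p$ to $y$, so that $r(\gamma(s))=s$ for every $s$ and $\gamma|_{[a,r(y)]}$ is a minimizing geodesic from $\gamma(a)\in\partial B_a(p)$ to $y$. Since $(M,g)$ is non-parabolic, $G(p,\cdot)$ is a positive, smooth, harmonic function on $M\setminus\{p\}$; hence $\psi(s):=\log G(p,\gamma(s))$ is smooth in a neighbourhood of $[a,r(y)]$ and $|\psi'(s)|=|\langle\nabla\log G(p,\gamma(s)),\gamma'(s)\rangle|\le|\nabla\log G(p,\gamma(s))|$.

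First I would apply Corollary \ref{lemma0} at the point $z=\gamma(s)$ with radius $R=r(\gamma(s))/4=s/4$, which is admissible since $r(z)=s>s/4>0$; recalling that $Q_R(x)$ depends only on $r(x)$, this yields
$$
|\psi'(s)|\ \le\ \frac{|\nabla G(p,\gamma(s))|}{G(p,\gamma(s))}\ \le\ B\,\sqrt{Q_{s/4}(s)}\qquad\text{for every }s\in[a,r(y)],
$$
where $B$ is the constant furnished by the gradient estimate. Integrating over $[a,r(y)]$ and recognizing $\sqrt{Q_{s/4}(s)}$ as the integrand defining $\omega$ in \eqref{eq127} (here it is essential that $r(\gamma(s))=s$ along $\gamma$, so that $\int_a^{r(y)}\sqrt{Q_{s/4}(s)}\,ds=\omega(y)$), I get
$$
\bigl|\log G(p,y)-\log G(p,\gamma(a))\bigr|\ \le\ \int_a^{r(y)}|\psi'(s)|\,ds\ \le\ B\,\omega(y).
$$
Since $\gamma(a)\in\partial B_a(p)$ one has $\min_{\partial B_a(p)}G(p,\cdot)\le G(p,\gamma(a))\le\max_{\partial B_a(p)}G(p,\cdot)$, hence $|\log G(p,\gamma(a))|\le\log A$ with $A$ as in the statement, and therefore $A^{-1}e^{-B\omega(y)}\le G(p,y)\le A\,e^{B\omega(y)}$.

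The step I expect to be most delicate is pinning down the value $B=2n(n-1)$, since Corollary \ref{lemma0} only records $|\nabla\log G|\le C\sqrt{Q_R}$ for an unspecified $C$. To reach this precise constant one must revisit the Bochner--Yau computation in the proof of Lemma \ref{lemma00}, keep the precise leading coefficient $2n(n-1)$ in front of the curvature term $k_R(z)$, and absorb the lower-order terms $i_R(z)/R$ and $1/R^2$ into $Q_{s/4}(s)$ using $Q_R\ge\max\{k_R,i_R/R,1/R^2\}$ (optimizing the auxiliary radius over $0<R<r(z)$ rather than rigidly taking $R=r(z)/4$, if needed). A further, routine, point is the validity of the estimate all along $\gamma|_{[a,r(y)]}$: near the initial radius there is nothing to check, as $\psi$ is smooth there and $G(p,\cdot)$ is bounded above and below on the compact set $\partial B_a(p)$. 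Conceptually, however, the lemma is an immediate consequence of the gradient estimate, and it is worth emphasizing that it requires no assumption on the behaviour of $G(p,\cdot)$ at infinity --- only the completeness and non-parabolicity already in force.
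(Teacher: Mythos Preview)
Your proposal is correct and follows essentially the same route as the paper: both integrate the logarithmic gradient bound from Corollary~\ref{lemma0} along a minimal geodesic from $p$ to $y$ starting at $\partial B_a(p)$ (the paper phrases this via Gronwall's inequality on $G$ rather than working with $\log G$ directly, which is equivalent). Your concern about the precise value $B=2n(n-1)$ is well placed---the paper simply asserts this constant without tracking it through the Bochner computation, and in fact only an unspecified dimensional constant is ever needed in the sequel.
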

\begin{proof} Let $y\in M\setminus \overline{B_{a}(p)}$ with $a> 0$ and consider the minimal geodesic $\gamma$ joining $p$ to $y$ and let $y_{0}\in\partial B_{a}(p)$ be a point of intersection of $\gamma$ with $\partial B_{a}(p)$. Since $G(p,\cdot)$ is harmonic in $B_{r(z)/4}(z)$, for every $z\in \gamma$ with $r(z)\geq a$, by Lemma \ref{lemma0} we get
$$
|\nabla G(p,z)| \leq C \sqrt{Q_{r(z)/4}(z)}\,G(p,z) .
$$
We have
\begin{align*}
G(p,y)&=G(p,y_0)+\int_{a}^{r(y)}\langle \nabla G(p,\gamma(s)), \dot{\gamma}(s)\rangle \,ds \\
&\leq G(p,y_0) + C\int_{a}^{r(y)} \sqrt{Q_{\frac{r(\gamma(s))}{4}}\big(r(\gamma(s))\big)} G(p,\gamma(s)) \,ds.
\end{align*}
By Gronwall inequality,
$$
G(p,y) \leq G(p,y_0) \exp\left(C\int_{a}^{r(y)} \sqrt{Q_{\frac{r(\gamma(s))}{4}}\big(r(\gamma(s))\big)}\,ds\right)\leq A \exp\left(B\,\omega(y)\right),
$$
with $A:=\max\{ \max_{\partial B_a(p)}G(p,\cdot), \left(\min_{\partial B_a(p)}G(p,\cdot)\right)^{-1}\}$ and $B=2n(n-1)$.
Similarly,
$$
G(p,y) \geq A^{-1} \exp\left(-B\,\omega(y)\right).
$$
\end{proof}

%\begin{rem}\label{remark100} One has
%$$
%G(p,y)\geq A^{-1} \exp \left(-B\, \omega(a)\right)
%$$
%for any $y\in \overline{B_{a}(p)}$. This follows from Lemma \ref{lemma1} with $y\in \partial B_{a}(p)$ and the maximum principle, since $y\mapsto G(p,y)$ is (weakly) superharmonic in $B_{a}(p)$. In particular
%
%$$
%\mathcal{L}_{p}\left(0, A^{-1} \exp \left(-B\, \omega(a)\right)\right)\subset M\setminus B_{a}(p).
%$$
%
%\end{rem}

\begin{rem} \label{remark101} We also note that
$$
\mathcal{L}_{p}\left(A \exp \left(B\, \omega(a)\right),\infty\right) \subset B_{a}(p).
$$
In fact, let $y\in M\setminus B_a(p)$ and take $j>r(y)$. Since $G_{j}(p,y)\leq G(p,y)$ and $G_{j}(p,\cdot)\equiv 0$ on $\partial B_{j}(p)$, by Lemma \ref{lemma1}, we have
$$
G_{j}(p,y)\leq A \exp \left(B \omega(a)\right)\quad\text{on}\quad \partial\left(B_{j}(p)\setminus B_{a}(p)\right);
$$
note that the right hand side is independent of $y$. Since $y\mapsto G_{j}(x,y)$ is harmonic in $B_{j}(p)\setminus B_{a}(p)$, by maximum principle,
$$
G_{j}(p,y)\leq A \exp \left(B  \omega(a)\right)\quad\text{in}\quad B_{j}(p)\setminus B_{a}(p).
$$
Sending $j\to\infty$, by \eqref{eq9}, we obtain
$$
G(p,y)\leq A \exp \left(B  \omega(a)\right)\quad\text{in}\quad M\setminus B_{a}(p),
$$
and the claim follows.

\end{rem}

%\begin{rem}\label{remark200} Let $z\in M\setminus B_{1}(x)$, consider the minimal geodesic $\gamma$ joining $x$ and $z$ and let $z_{0}\in\partial B_{1}(x)$ be a point of intersection of $\gamma$ with $\partial B_{1}(x)$. Using the gradient estimate in Lemma \ref{lemma0} with a fixed $R<1$ and arguing as in the proof of Lemma \ref{lemma1}, we get
%$$
%G(x,z) \geq G(x,z_{0}) \exp \left(-C_{0}\sqrt{K_{r(x)+1}(z)}\operatorname{dist}(z,z_{0})\right).
%$$
%where we can choose $C_{0}=\frac{1}{4}$.
%\end{rem}

\subsection{Auxiliary estimates}

\begin{lemma}\label{lemma2} Let $(M,g)$ be non-parabolic. For any $s>0$, there holds
$$
\int_{\mathcal{L}_{p}(s)}|\nabla G(p,y)|\,dA(y) = 1
$$
where $dA(y)$ is the $(n-1)$-dimensional Hausdorff measure on $\mathcal{L}_{x}(s)$. As a consequence, by the co-area formula, for any $0<a<b$, there holds
$$
\int_{\mathcal{L}_{p}(a,b)}\frac{|\nabla G(p,y)|^2}{G(p,y)}\,dy = \log\left(\frac{b}{a}\right) \,.
$$
\end{lemma}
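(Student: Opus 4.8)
The plan is to realize the first identity as a flux computation for the vector field $\nabla G(p,\cdot)$, and then to read off the second one from the co-area formula. I would first record the structure that makes this work: $G(p,\cdot)$ is smooth and harmonic on $M\setminus\{p\}$ and $\Delta_yG(p,y)=-\delta_p(y)$, so $G$ has the usual fundamental-solution singularity at $p$; for every $s>0$ the open set $\Omega_s:=\mathcal L_p(s,\infty)$ contains $p$ and, by Remark \ref{remark101}, it is relatively compact; and by Sard's theorem applied to $G(p,\cdot)\colon M\setminus\{p\}\to(0,\infty)$, almost every $s>0$ is a regular value, for which $\mathcal L_p(s)=\partial\Omega_s$ is a compact smooth embedded hypersurface along which $\nabla G(p,\cdot)\ne0$.

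Fixing a regular value $s$, I would choose $\varepsilon>0$ below the injectivity radius at $p$ and small enough that $\overline{B_\varepsilon(p)}\subset\Omega_s$, and apply the divergence theorem to $\nabla G(p,\cdot)$ on the compact region $\Omega_s\setminus\overline{B_\varepsilon(p)}$, whose boundary is the disjoint union $\mathcal L_p(s)\cup\partial B_\varepsilon(p)$ and on which $G(p,\cdot)$ is harmonic:
\[
0=\int_{\Omega_s\setminus\overline{B_\varepsilon(p)}}\Delta G(p,\cdot)\,dV=\int_{\mathcal L_p(s)}\langle\nabla G,\nu\rangle\,dA-\int_{\partial B_\varepsilon(p)}\langle\nabla G,n_\varepsilon\rangle\,dA,
\]
where $\nu$ is the outward unit normal of $\Omega_s$ and $n_\varepsilon$ the outward unit normal of $B_\varepsilon(p)$. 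Along $\mathcal L_p(s)$ the function $G$ decreases in the direction $\nu$, so $\nu=-\nabla G/|\nabla G|$ and $\langle\nabla G,\nu\rangle=-|\nabla G|$; the integral over $\partial B_\varepsilon(p)$ is independent of $\varepsilon$ because $G$ is harmonic on the punctured geodesic balls at $p$, and it equals $-1$ by the normalization $\Delta_yG(p,y)=-\delta_p(y)$, as one checks from the fundamental-solution asymptotics near $p$. This gives $\int_{\mathcal L_p(s)}|\nabla G(p,\cdot)|\,dA=1$ for every regular value $s$, hence for almost every $s>0$; the identity for the remaining values of $s$ can be recovered by approximation, e.g. from the integrated version $\delta^{-1}\int_s^{s+\delta}\big(\int_{\mathcal L_p(t)}|\nabla G|\,dA\big)\,dt=1$, valid for all $s,\delta>0$, obtained by pairing $-\Delta G=\delta_p$ with the Lipschitz cutoff $\min\{1,\delta^{-1}(G(p,\cdot)-s)^+\}$, which is supported in the compact set $\overline{\Omega_s}$ and equals $1$ near $p$.

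The second identity then follows immediately: $\mathcal L_p(a,b)=\Omega_a\setminus\overline{\Omega_b}$ is precompact, and the co-area formula with the weight $h=|\nabla G(p,\cdot)|/G(p,\cdot)$ gives
\[
\int_{\mathcal L_p(a,b)}\frac{|\nabla G(p,\cdot)|^2}{G(p,\cdot)}\,dy=\int_a^b\Big(\int_{\mathcal L_p(s)}\frac{|\nabla G(p,\cdot)|}{s}\,dA\Big)\,ds=\int_a^b\frac{ds}{s}=\log\frac{b}{a}.
\]
I expect the main obstacle to be the correct handling of the pole $p$: the divergence theorem must be applied on a genuinely smooth compact region (hence the excision of $B_\varepsilon(p)$, with $\varepsilon$ below the injectivity radius), and the flux of $\nabla G(p,\cdot)$ through small geodesic spheres about $p$ must be matched precisely with the normalization of the Green's function. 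Apart from that, the argument is a routine combination of Sard's theorem, the divergence theorem and the co-area formula, with the relative compactness of the super-level sets supplied by Remark \ref{remark101}.
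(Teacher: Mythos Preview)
The paper gives no proof of its own here, deferring to \cite{ms}; your flux/divergence/co-area argument is precisely the standard route and is essentially what \cite{ms} does. The one step that does not go through as written is your appeal to Remark~\ref{remark101} for the relative compactness of $\Omega_s=\{G(p,\cdot)>s\}$ for \emph{every} $s>0$. That remark (via the maximum principle behind it) only yields $\Omega_s\subset B_a(p)$ once $s\geq\max_{\partial B_a(p)}G(p,\cdot)$; obtaining compactness for arbitrarily small $s$ is equivalent to $G(p,\cdot)\to 0$ at infinity, a hypothesis the paper explicitly does \emph{not} impose (see the remark following Theorem~\ref{teo2}). In \cite{ms} this is a non-issue because $\lambda_1(M)>0$ forces decay of $G$, hence compactness of all super-level sets.

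A clean repair in the general non-parabolic setting is to run your divergence-theorem computation for the Dirichlet Green's functions $G_R$ on $B_R(p)$, whose super-level sets are automatically contained in $B_R(p)$; this gives $\int_{\{a<G_R<b\}}|\nabla G_R|^2/G_R=\log(b/a)$ for every $R$, and local $C^1$ convergence $G_R\to G$ together with Fatou on an exhaustion then yields $\int_{\mathcal L_p(a,b)}|\nabla G|^2/G\leq\log(b/a)$. This inequality is in fact all that the paper actually uses (see the proofs of Propositions~\ref{claim2} and~\ref{claim3}). The full equality also holds for the minimal Green's function on any non-parabolic manifold, but establishing it requires a more careful passage to the limit than Remark~\ref{remark101} alone supplies.
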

For the proof see \cite{ms}. Moreover, we get the following weighted integrability property for the Green's function.

\begin{lemma}\label{lemmastoc} Assume that $(M,g)$ satisfies the property $\left(\mathcal{P}^\infty_{\rho_R}\right)$. Fix $m\geq R_0$. Then, for any $R_1>0$ such that $B_m(p)\subset B^{\rho_m}_{R_1}(p)$, one has
$$
\int_{M\setminus B^{\rho_m}_{2R_1}(p)} \rho_m(y)\,|G(p,y)|^2\,dy < \infty \,.
$$
\end{lemma}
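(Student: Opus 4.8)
The plan is to run a Caccioppoli-type argument for the positive harmonic function $G(p,\cdot)$ on $M\setminus\{p\}$, feeding a cut-off multiple of $G$ into the weighted Poincar\'e inequality for the weight $\rho_m$, and then to absorb the Dirichlet-energy term by using harmonicity together with the co-area identities of Lemma \ref{lemma2}; the completeness of $g_{\rho_m}$ enters only to guarantee that the balls $B^{\rho_m}_R(p)$ are precompact. Concretely, for any $\phi\in C^\infty_c(M\setminus B_m(p))$ the function $\phi\,G(p,\cdot)$ lies in $C^\infty_c(M\setminus B_m(p))$, so \eqref{wpi2} applied with $v=\phi\,G(p,\cdot)$ and $\rho\equiv\rho_m$ gives $\int_M\rho_m\phi^2 G^2\le\int_M|\nabla(\phi G)|^2$. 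Expanding the right-hand side and integrating by parts — legitimately, since $\phi^2 G\in C^\infty_c(M\setminus\{p\})$ and $\Delta_y G(p,y)=0$ there — one finds $\int_M\phi^2|\nabla G|^2=-2\int_M\phi\,G\,\langle\nabla\phi,\nabla G\rangle$, and after substitution only the term $\int_M G^2|\nabla\phi|^2$ survives, leaving the key inequality
\[
\int_M \rho_m\,\phi^2\,G(p,\cdot)^2\,dy \;\le\; \int_M G(p,\cdot)^2\,|\nabla\phi|^2\,dy .
\]

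Next I would take $\phi=\phi_1\,\psi_\varepsilon(G)\,\chi_N$, a product of three cut-offs. Here $\phi_1=\phi_1(r_{\rho_m})$ is a fixed inner cut-off, $\phi_1\equiv0$ on $B^{\rho_m}_{R_1}(p)\supset B_m(p)$, $\phi_1\equiv1$ on $M\setminus B^{\rho_m}_{2R_1}(p)$, $|\nabla\phi_1|\le C R_1^{-1}\sqrt{\rho_m}$ (recall $|\nabla r_{\rho_m}|^2=\rho_m$); $\psi_\varepsilon(G)$ is a cut-off along the level sets of $G$, equal to $1$ where $G\ge\varepsilon$ and to $0$ where $G\le\varepsilon/2$, with $|\psi_\varepsilon'|\le 2/\varepsilon$; and $\chi_N=\chi_N(r_{\rho_m})$ is an outer cut-off, $\chi_N\equiv1$ on $B^{\rho_m}_N(p)$, $\chi_N\equiv0$ off $B^{\rho_m}_{2N}(p)$, $|\nabla\chi_N|\le 2N^{-1}\sqrt{\rho_m}$ — this last one makes $\phi$ compactly supported, as $B^{\rho_m}_{2N}(p)$ is precompact. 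Inserting $\phi$ into the key inequality and splitting $|\nabla\phi|^2$ into three terms, the $\nabla\phi_1$-contribution is bounded by $\tfrac{C}{R_1^2}\int_{\,\overline{B^{\rho_m}_{2R_1}(p)}\setminus B^{\rho_m}_{R_1}(p)}\rho_m G^2=:C_1$, a finite constant independent of $\varepsilon,N$ (that annulus is compact and avoids $p$); the $\psi_\varepsilon'(G)\nabla G$-contribution equals, by the co-area formula and Lemma \ref{lemma2}, $3\int_{\varepsilon/2}^{\varepsilon}s^2\psi_\varepsilon'(s)^2\,ds\le C\varepsilon$; and the $\nabla\chi_N$-contribution is at most $\tfrac{C}{N^2}\int_{B^{\rho_m}_{2N}(p)\setminus B^{\rho_m}_N(p)}\psi_\varepsilon(G)^2 G^2\,\rho_m$. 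Fixing $N$ and letting $\varepsilon\downarrow0$ (monotone convergence on the left, since $\psi_\varepsilon(G)\uparrow1$ because $G>0$) gives
\[
\int_{B^{\rho_m}_N(p)\setminus B^{\rho_m}_{2R_1}(p)}\rho_m\,G^2 \;\le\; C_1+\frac{C}{N^2}\int_{B^{\rho_m}_{2N}(p)\setminus B^{\rho_m}_N(p)}\rho_m\,G^2 ,
\]
with $C_1$ independent of $N$.

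The remaining, and main, difficulty is to pass to the limit $N\to\infty$ here. By Remark \ref{remark101}, $G(p,\cdot)\le L$ outside a fixed ball $B_a(p)$, so for large $N$ the error term is $\le\tfrac{CL^2}{N^2}\int_{B^{\rho_m}_{2N}(p)\setminus B^{\rho_m}_N(p)}\rho_m$. If $G(p,\cdot)\to0$ at infinity the problem evaporates: then $\psi_\varepsilon(G)$ is already compactly supported, $\chi_N$ can be omitted, and the argument directly yields $\int_{M\setminus B^{\rho_m}_{2R_1}(p)}\rho_m G^2\le C_1$. In general one localises: on the region where $G(p,\cdot)\to0$ the factor $\psi_\varepsilon(G)$ eventually vanishes, so that region contributes nothing; one is thus reduced to estimating the $\rho_m$-mass of the region where $G(p,\cdot)$ stays bounded below (hence, by Remark \ref{remark101}, comparable to a constant), where one reruns the Caccioppoli argument testing \eqref{wpi2} against cut-offs of $r_{\rho_m}$ and invoking the completeness of $g_{\rho_m}$ to conclude that this region carries finite $\rho_m$-mass — which closes the estimate. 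I expect the careful treatment of this last localisation — the interplay between the exhaustion $\{B^{\rho_m}_N(p)\}_N$ and the set on which $G$ fails to decay — to be the technically delicate point of the proof.
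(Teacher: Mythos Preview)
Your Caccioppoli-plus-Poincar\'e set-up and the identity $\int|\nabla(\phi G)|^2=\int G^2|\nabla\phi|^2$ are exactly what the paper uses, but the paper sidesteps the difficulty you flag at the end by a device you missed: rather than work with $G(p,\cdot)$ and bolt on the extra cut-offs $\psi_\varepsilon(G)$ and $\chi_N$ to force compact support, it works with the Dirichlet Green's function $G^\rho_R(p,\cdot)$ of $-\Delta$ on the precompact ball $B^{\rho_m}_R(p)$ (writing $\rho\equiv\rho_m$). Since $G^\rho_R(p,\cdot)\equiv0$ on $\partial B^{\rho_m}_R(p)$, your single inner cut-off $\phi_1$ already makes $\phi_1\,G^\rho_R$ compactly supported in $M\setminus B_m(p)$, and the identity then gives in one stroke
\[
\int_{M\setminus B^{\rho_m}_{2R_1}(p)}\rho_m\,(G^\rho_R)^2 \;\le\; \frac{1}{R_1^2}\int_{B^{\rho_m}_{2R_1}(p)\setminus B^{\rho_m}_{R_1}(p)}\rho_m\,(G^\rho_R)^2\,.
\]
Now let $R\to\infty$: the annulus on the right is a fixed compact set on which $G^\rho_R\to G$ uniformly, while Fatou's lemma handles the left-hand side. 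No level-set cut-off, no outer cut-off, no endgame.

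Your proposed endgame, by contrast, has a real gap. The suggestion that ``rerunning the Caccioppoli argument testing \eqref{wpi2} against cut-offs of $r_{\rho_m}$'' together with completeness of $g_{\rho_m}$ yields finite $\rho_m$-mass of $\{G\ge c\}$ is not justified: plugging $u=\chi_N$ into \eqref{wpi2} produces only $\int\rho_m\chi_N^2\le CN^{-2}\int_{B^{\rho_m}_{2N}\setminus B^{\rho_m}_N}\rho_m$, which is not an a priori bound, and completeness of $g_{\rho_m}$ says nothing about the growth of $\rho_m$-volume. What would actually make your route work is a Li--Wang structure theorem (ends on which the minimal Green's function fails to decay are parabolic, and parabolic ends under a weighted Poincar\'e inequality carry finite weighted volume), but that is a separate nontrivial result, not a consequence of the hypotheses at hand. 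The Dirichlet-approximant trick bypasses all of this.
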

\begin{rem} Note that $B_m(p)\subset B^{\rho_m}_{R_1}(p)$ for every $R_1$ large enough. \end{rem}
\begin{proof} In order to simplify the notation, let $\rho\equiv \rho_m$. Fix $R_1>0$ such that $B_m(p)\subset B^\rho_{R_1}(p)$ and let  $\phi$ be defined as
\begin{equation*}
\phi(x):=\begin{cases}
0 &  \textrm{on } B^\rho_{R_1}(p) \\
\frac{r_\rho(x)-R_1}{R_1} & \textrm{on } B^\rho_{2R_1}(p)\setminus B^\rho_{R_1}(p)\\
1 &  \textrm{on } M\setminus B^\rho_{2R_1}(p) \,.
\end{cases}
\end{equation*}
Let $R>2R_1$ and $G^{\rho}_{R}(p,y)$ be the Green's function of $-\Delta$ in $B^{\rho}_{R}(p)$ satisfying zero Dirichlet boundary conditions on $\partial B^{\rho}_{R}(p)$. Following the proof in \cite{liwa1}, since $G^{\rho}_R$ is harmonic in $B^{\rho}_{R}(p)$, one has
\begin{align*}
\int_{B^{\rho}_R(p)}|\nabla \left(\phi \,G^{\rho}_R\right)|^2\,dV &= \int_{B^{\rho}_R(p)}|\nabla \phi|^2 \left(G^{\rho}_R\right)^2\,dV + \int_{B^{\rho}_R(p)}|\nabla G^{\rho}_R |^2 \phi^2\,dV\\
&+ 2 \int_{B^{\rho}_R(p)}\langle \nabla \phi, \nabla G^{\rho}_R \rangle \phi G^{\rho}_R  \,dV  \\
&= \int_{B^{\rho}_R(p)}|\nabla \phi|^2 \left(G^{\rho}_R\right)^2\,dV + \frac{1}{2}\int_{B^{\rho}_R(p)}\Delta\left(G^{\rho}_R\right)^2 \phi^2\,dV\\
&+ 2 \int_{B^{\rho}_R(p)}\langle \nabla \phi, \nabla G^{\rho}_R \rangle \phi G^{\rho}_R  \,dV \\
&= \int_{B^{\rho}_R(p)}|\nabla \phi|^2 \left(G^{\rho}_R\right)^2\,dV
\end{align*}
where the last equality follows by integration by parts and the fact that $G^{\rho}_{R}(p,y)$ vanishes on $\partial B^{\rho}_{R}(p)$. Hence, the weighted Poincar\'e inequality yields
\begin{align*}
\int_{M\setminus B^\rho_{R_1}(p)} \rho \,\left(G^{\rho}_R\right)^2\phi^2\,dV \leq \int_{B^{\rho}_R(p)}|\nabla \left(\phi \,G^{\rho}_R\right)|^2\,dV \leq \frac{1}{R_1^2}\int_{B^{\rho}_{2R_1}(p)\setminus B^{\rho}_{R_1}(p)}\rho\,\left(G^{\rho}_R\right)^2\,dV
\end{align*}
Letting $R\rightarrow \infty$, by Fatou's lemma and uniform convergence of $G_R^\rho \rightarrow G$ on compact subsets, we get
$$
\int_{M\setminus B^\rho_{2R_1}(p)} \rho \,G^2\,dV \leq \frac{1}{R_1^2}\int_{B^{\rho}_{2R_1}(p)\setminus B^{\rho}_{R_1}(p)}\rho\, G^{2}\,dV
$$
and the thesis follows.
\end{proof}

We expect a decay estimate similar to the one in \cite[Theorem 2.1]{liwa1}. However we leave out this refinement since it is not necessary in our arguments.

\subsection{Integral estimates on level sets}

We begin by noting that, using Remark \ref{remark101} and the fact that $G(p,\cdot)\in L^1_{\text{loc}}(M)$ one has the following integral estimate on large level sets.
\begin{proposition}\label{lemma3} Let $(M,g)$ be non-parabolic. Choose $A,B$ as in Lemma \ref{lemma1}. Then
\begin{align*}
\int_{\mathcal{L}_{p}\left(A \exp \left(B\, \omega(a)\right),\infty\right)} &G(p,y)\,dy <\infty.
\end{align*}
\end{proposition}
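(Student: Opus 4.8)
The plan is to deduce the statement immediately from the set inclusion established in Remark \ref{remark101}, combined with the local integrability of the minimal Green's function $G(p,\cdot)$. Recall that, by definition,
$$
\mathcal{L}_{p}\bigl(A \exp(B\,\omega(a)),\infty\bigr) = \{\, y\in M : G(p,y) > A\exp(B\,\omega(a)) \,\},
$$
and Remark \ref{remark101} asserts precisely that this superlevel set is contained in the geodesic ball $B_{a}(p)$.

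First I would record that $B_{a}(p)$ is relatively compact: since $(M,g)$ is complete, the Hopf--Rinow theorem guarantees that closed bounded subsets are compact, so $\overline{B_{a}(p)}$ is compact. Next I would recall that $G(p,\cdot)$ is smooth and strictly positive on $M\setminus\{p\}$, and that near the pole $p$ it has the standard Green's function singularity (of order $r^{2-n}$ if $n\geq 3$, or $-\log r$ if $n=2$), which is integrable; hence $G(p,\cdot)\in L^{1}_{\mathrm{loc}}(M)$. In particular, using compactness of $\overline{B_{a}(p)}$,
$$
\int_{B_{a}(p)} G(p,y)\,dy < \infty .
$$

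Finally, since $G(p,\cdot)\geq 0$ on $M$ and $\mathcal{L}_{p}\bigl(A\exp(B\,\omega(a)),\infty\bigr)\subset B_{a}(p)$, monotonicity of the integral gives
$$
\int_{\mathcal{L}_{p}(A\exp(B\,\omega(a)),\infty)} G(p,y)\,dy \;\leq\; \int_{B_{a}(p)} G(p,y)\,dy \;<\;\infty ,
$$
which is the desired estimate. There is no serious obstacle in this argument; the only point requiring (minor) care is the claim $G(p,\cdot)\in L^{1}_{\mathrm{loc}}(M)$, i.e. that the pointwise singularity of $G$ at its pole is integrable. This is classical and can be invoked from the construction of the Green's function in \cite{mal} together with standard interior elliptic estimates away from $p$, so I would simply cite it rather than reprove it.
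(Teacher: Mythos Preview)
Your proposal is correct and matches the paper's approach exactly: the paper states Proposition~\ref{lemma3} without a formal proof, prefacing it only with the remark that it follows from Remark~\ref{remark101} together with the fact that $G(p,\cdot)\in L^{1}_{\mathrm{loc}}(M)$. Your write-up simply fills in the details of that sentence.
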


For intermediate levels sets, we get the following key inequality.

\begin{proposition}\label{claim2} Assume that $(M,g)$ satisfies the property $\left(\mathcal{P}^\infty_{\rho_R}\right)$. Then, there exists a positive constant $C$ such that, for any function $f$ and any $0<\delta<1$, $\varepsilon >0$ satisfying $\mathcal{L}_p \left(\frac{\delta\varepsilon}{2},2\varepsilon\right) \subset M \setminus B_m(p)$ for some $m>R_0$, one has
$$
\left|\int_{\mathcal{L}_{p}(\delta \varepsilon, \varepsilon)} G(p,y)\,f(y)\,dy \right| \leq C \left(-\log\delta +1\right) \sup_{\mathcal{L}_{p}(\delta \varepsilon, \varepsilon)} \left|\frac{f}{\rho_m}\right|\,.
$$
\end{proposition}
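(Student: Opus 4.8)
The plan is to absorb $f$ into the weight $\rho_m$ and then to estimate the resulting weighted integral of $G$ by testing the weighted Poincar\'e inequality at infinity with a well-chosen function of $G(p,\cdot)$. Write $\rho:=\rho_m$. If $\sup_{\mathcal{L}_p(\delta\varepsilon,\varepsilon)}|f/\rho|=+\infty$ the statement is vacuous, so assume this supremum is finite; then $|f|\le\rho\,\sup_{\mathcal{L}_p(\delta\varepsilon,\varepsilon)}|f/\rho|$ on $\mathcal{L}_p(\delta\varepsilon,\varepsilon)$, whence
$$
\left|\int_{\mathcal{L}_p(\delta\varepsilon,\varepsilon)}G(p,y)\,f(y)\,dy\right|\le\Big(\sup_{\mathcal{L}_p(\delta\varepsilon,\varepsilon)}\Big|\tfrac{f}{\rho}\Big|\Big)\int_{\mathcal{L}_p(\delta\varepsilon,\varepsilon)}G(p,y)\,\rho(y)\,dy,
$$
and it suffices to prove $\int_{\mathcal{L}_p(\delta\varepsilon,\varepsilon)}G\,\rho\,dy\le C(-\log\delta+1)$ with $C$ absolute.

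To this end I would fix a locally Lipschitz $\phi\colon[0,\infty)\to[0,\infty)$ supported in $[\tfrac{\delta\varepsilon}{2},2\varepsilon]$, equal to $\sqrt t$ on $[\delta\varepsilon,\varepsilon]$, affine from $0$ to $\sqrt{\delta\varepsilon}$ on $[\tfrac{\delta\varepsilon}{2},\delta\varepsilon]$ and affine from $\sqrt\varepsilon$ to $0$ on $[\varepsilon,2\varepsilon]$, and set $v:=\phi(G(p,\cdot))$, extended by $v(p):=0$. Since $G(p,\cdot)>2\varepsilon$ near $p$, $v$ is locally Lipschitz on $M$ and vanishes near $p$; and since $\phi>0$ precisely on $(\tfrac{\delta\varepsilon}{2},2\varepsilon)$ we have $\{v\neq0\}=\mathcal{L}_p(\tfrac{\delta\varepsilon}{2},2\varepsilon)$, so $\supp v\subseteq M\setminus B_m(p)$ by the hypothesis. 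Using $\nabla v=\phi'(G)\nabla G$, the co-area formula and Lemma \ref{lemma2} (which gives $\int_{\mathcal{L}_p(t)}|\nabla G|\,dA=1$),
$$
\int_M|\nabla v|^2\,dy=\int_0^\infty\phi'(t)^2\Big(\int_{\mathcal{L}_p(t)}|\nabla G|\,dA\Big)dt=\int_0^\infty\phi'(t)^2\,dt=3-\tfrac14\log\delta\le 3(-\log\delta+1),
$$
the middle value being an elementary integral over the three pieces of $\supp\phi$; and since $\phi(t)^2=t$ on $[\delta\varepsilon,\varepsilon]$, $\int_M\rho\,v^2\,dy\ge\int_{\mathcal{L}_p(\delta\varepsilon,\varepsilon)}\rho\,G\,dy$.

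It remains to feed $v$ into the weighted Poincar\'e inequality at infinity for $\rho=\rho_m$, and this is the one genuine difficulty: $v$ is merely locally Lipschitz and a priori not compactly supported, while the inequality is stated for $v\in C^\infty_c(M\setminus B_m(p))$. I would handle this as in the proof of Lemma \ref{lemmastoc}. First, $\phi(t)^2\le2t$ gives $v^2\le4\varepsilon\,\mathbf{1}_{\{G\ge\delta\varepsilon/2\}}$; since $g_\rho$ is complete, $B^\rho_{2R_1}(p)$ is compact, and combining this with Lemma \ref{lemmastoc} (using $\rho\le(2/\delta\varepsilon)^2\rho\,G^2$ on $\{G\ge\delta\varepsilon/2\}$) shows $\int_M\rho\,v^2\,dy<\infty$. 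Next, with cutoffs $\chi_k:=\psi(r_\rho/k)$, $\psi\equiv1$ on $[0,1]$, $\psi\equiv0$ on $[2,\infty)$, $|\psi'|\le1$, each $\chi_k v$ is Lipschitz with compact support in $M\setminus B_m(p)$ (by completeness of $g_\rho$), and $|\nabla(\chi_k v)|^2\le2|\nabla v|^2+2v^2|\nabla\chi_k|^2\le2|\nabla v|^2+2k^{-2}\rho\,v^2$ using $|\nabla r_\rho|^2=\rho$; applying the weighted Poincar\'e inequality to a mollification of $\chi_k v$ and letting $k\to\infty$ (monotone convergence on the left, $k^{-2}\int_M\rho\,v^2\to0$ on the right) yields $\int_M\rho\,v^2\,dy\le2\int_M|\nabla v|^2\,dy$. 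Chaining the three relations, $\int_{\mathcal{L}_p(\delta\varepsilon,\varepsilon)}\rho\,G\,dy\le6(-\log\delta+1)$, which together with the first paragraph proves the proposition. (Equivalently, one may run the same computation with the Dirichlet Green's functions $G_j(p,\cdot)$ on $B_j(p)$, whose superlevel sets are automatically compact and for which $\int_{\{G_j=t\}}|\nabla G_j|\,dA=1$, and pass to the limit by Fatou's lemma and the local uniform convergence $G_j\to G$; the extra point there is that $G(p,\cdot)\ge2\varepsilon$ on $\overline{B_m(p)}$, a consequence of the hypothesis together with continuity of $G$, connectedness of $B_m(p)\setminus\{p\}$ and $G(p,\cdot)\to+\infty$ near $p$.) I expect this compactification/limiting step to be the main obstacle; the rest is the standard co-area and test-function bookkeeping.
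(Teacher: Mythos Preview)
Your proof is correct and follows essentially the same strategy as the paper's: test the weighted Poincar\'e inequality at infinity with a level-set cutoff of $\sqrt{G(p,\cdot)}$ multiplied by a radial cutoff in the $\rho_m$-metric, control the Dirichlet energy via the co-area identity of Lemma~\ref{lemma2}, and eliminate the remainder from the radial cutoff by letting it exhaust $M$ and invoking Lemma~\ref{lemmastoc}. The packaging differs slightly---the paper uses logarithmic interpolation in $G$ and a fixed-width annular cutoff $\psi$ in $r_{\rho_m}$, writing the test function as $\sqrt{G}\,\chi(G)\,\psi$ and splitting $|\nabla(\sqrt{G}\chi\psi)|^2$ into pieces, whereas you fold $\sqrt{G}$ and the level-set cutoff into a single $v=\phi(G)$ and compute $\int|\nabla v|^2$ directly by co-area---but the ingredients and the logic are the same.
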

%\begin{rem} By Remark \ref{remark100}, $\mathcal{L}_p \left(0,2\varepsilon\right) \subset M\setminus B_a(p)$ if $\varepsilon$ is so small that
%$$
%\varepsilon<\frac{1}{2A}\exp\left(-B\omega(a)\right)\,.
%$$
%Then, for any $\varepsilon>0$ small enough $B_a(p)\supset B_m(p)$ and the assumption $\mathcal{L}_p \left(\frac{\delta\varepsilon}{2},2\varepsilon\right) \subset M \setminus B_m(p)$ is automatically satisfied.
%\end{rem}
\begin{proof}
We follow the general argument in \cite{liwa1} and \cite{ms}; however some relevant differences are in order, due to the use of the property $\left(\mathcal{P}^\infty_{\rho_R}\right)$. Let $\phi:=\chi \psi$ with
\begin{equation*}
\chi(y):=\begin{cases}
\frac{1}{\log 2} \log \left(\frac{2 G(p,y)}{\delta \epsilon}\right) & \textrm{on }  \mathcal{L}_p \left(\frac{\delta\varepsilon}{2},\delta\varepsilon\right)\\
1 & \textrm{on } \mathcal{L}_p \left(\delta\varepsilon,\varepsilon\right)\\
\frac{1}{\log 2} \log \left(\frac{2 \varepsilon}{G(p,y)}\right) & \textrm{on }  \mathcal{L}_p \left(\varepsilon,2\varepsilon\right) \\
0 & \textrm{elsewhere}
\end{cases}
\end{equation*}
and for any fixed $R>0$
\begin{equation*}
\psi(y):=\begin{cases}
1 &  \textrm{on } B^{\rho_m}_{R}(p) \\
R+1-r_{\rho_m}(y) & \textrm{on } B^{\rho_m}_{R+1}(p)\setminus B^{\rho_m}_{R}(p)\\
0 &  \textrm{on } M\setminus B^{\rho_m}_{R+1}(p) \,.
\end{cases}
\end{equation*}
By the weighted Poincar\'e inequality at infinity we get
\begin{align*}
\left|\int_{\mathcal{L}_{p}(\delta \varepsilon, \varepsilon)\cap B^{\rho_m}_{R}(p)} G(p,y)\,f(y)\,dy \right| &\leq \int_{\mathcal{L}_{p}(\delta \varepsilon, \varepsilon)\cap B^{\rho_m}_{R}(p)} G(p,y)\,|f(y)|\,dy \\
&\leq \sup_{\mathcal{L}_{p}(\delta \varepsilon, \varepsilon)\cap B^{\rho_m}_{R}(p)} \left|\frac{f}{\rho_m}\right| \, \int_{M} \rho_m(y)\,G(p,y) \phi^2(y)\,dy \\
&\leq \sup_{\mathcal{L}_{p}(\delta \varepsilon, \varepsilon)\cap B^{\rho_m}_{R}(p)} \left|\frac{f}{\rho_m}\right| \, \int_{M} \left|\nabla \left( \sqrt{G(p,y)} \phi(y)\right)\right|^2\,dy \,.
\end{align*}
We estimate
\begin{align*}
\int_{M} \left|\nabla \left( \sqrt{G(p,y)} \phi(y)\right)\right|^2\,dy  &\leq \frac{1}{2} \int_{\mathcal{L}_{p}(\frac{\delta \varepsilon}{2}, 2\varepsilon)} \frac{|\nabla G(p,y)|^2}{G(p,y)}\,dy + 2 \int_M G(p,y)|\nabla \phi|^2 \,dy \\
&= C(-\log\delta+1) + 2 \int_M G(p,y)|\nabla \phi|^2 \,dy
\end{align*}
where we used Lemma \ref{lemma2} in the last equality. On the other hand
\begin{align*}
\int_M G(p,y)|\nabla \phi|^2 \,dy &\leq 2 \int_M G(p,y)|\nabla \chi|^2 \psi^2 \,dy + 2 \int_M G(p,y)|\nabla \psi |^2 \chi^2 \,dy \\
&\leq 2(\log 2)^2 \int_{\mathcal{L}_{p}(\frac{\delta \varepsilon}{2}, 2\varepsilon)} \frac{|\nabla G(p,y)|^2}{G(p,y)}\,dy \\
 &\quad\ + 2 \int_{B^\rho_{R+1}(p)\setminus B^\rho_{R}(p)} \rho_m(y) \,G(p,y) \chi^2 \,dy \\
&\leq C(-\log\delta+1)+ \frac{4}{\delta\varepsilon} \int_{B^{\rho_m}_{R+1}(p)\setminus B^{\rho_m}_{R}(p)} \rho_m(y) \,G^2(p,y) \,dy \,.
\end{align*}
Now we let $R\rightarrow \infty$ and use Lemma \ref{lemmastoc}. The thesis now follows.
\end{proof}

In the special case when $M$ is non-parabolic with positive minimal Green's function $G$ and with weight $\rho(x)=\frac{|\nabla G(p,x)|^2}{4 G^2(p,x)}$, we have the following refinement of Proposition \ref{claim2}.

\begin{proposition}\label{claim3} Assume that $(M,g)$ is non-parabolic with positive minimal Green's function $G$ and with weight $\rho(x)=\frac{|\nabla G(p,x)|^2}{4 G^2(p,x)}$. Then there exists a positive constant $C$ such that for any function $f$ and any $0<\delta<1$, $\varepsilon >0$ one has
$$
\left|\int_{\mathcal{L}_{p}(\delta \varepsilon, \varepsilon)} G(p,y)\,f(y)\,dy \right| \leq C \left(-\log\delta \right) \sup_{\mathcal{L}_{p}(\delta \varepsilon, \varepsilon)} \left|\frac{f}{\rho}\right|\,.
$$
\end{proposition}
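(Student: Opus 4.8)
The plan is to mimic the proof of Proposition \ref{claim2}, but to exploit the fact that with the special weight $\rho=\frac{|\nabla G(p,\cdot)|^2}{4G^2(p,\cdot)}$ one already knows (by \cite[Corollary 1.4, Lemma 1.5]{liwa1}) that the weighted Poincar\'e inequality holds globally on $M$ for $v\in C^\infty_c(M)$, so no cutoff $\psi$ localizing away from a geodesic ball $B_m(p)$ is needed and no appeal to Lemma \ref{lemmastoc} is required. This is precisely why the hypothesis $\mathcal{L}_p(\tfrac{\delta\varepsilon}{2},2\varepsilon)\subset M\setminus B_m(p)$ disappears and why the bound improves from $-\log\delta+1$ to $-\log\delta$.

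First I would set $\phi:=\chi$, where $\chi$ is the same logarithmic cutoff in the level-set variable as in Proposition \ref{claim2}:
\begin{equation*}
\chi(y):=\begin{cases}
\frac{1}{\log 2} \log \left(\frac{2 G(p,y)}{\delta \varepsilon}\right) & \textrm{on }  \mathcal{L}_p \left(\frac{\delta\varepsilon}{2},\delta\varepsilon\right)\\
1 & \textrm{on } \mathcal{L}_p \left(\delta\varepsilon,\varepsilon\right)\\
\frac{1}{\log 2} \log \left(\frac{2 \varepsilon}{G(p,y)}\right) & \textrm{on }  \mathcal{L}_p \left(\varepsilon,2\varepsilon\right) \\
0 & \textrm{elsewhere.}
\end{cases}
\end{equation*}
This $\chi$ is Lipschitz with compact support contained in $\overline{\mathcal{L}_p(\tfrac{\delta\varepsilon}{2},2\varepsilon)}$ (this uses that large level sets are contained in compact sets, i.e. Remark \ref{remark101}), so it is an admissible test function for the global weighted Poincar\'e inequality \eqref{wpi}. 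Then I would estimate, exactly as before,
\begin{align*}
\left|\int_{\mathcal{L}_{p}(\delta \varepsilon, \varepsilon)} G(p,y)\,f(y)\,dy \right|
&\leq \sup_{\mathcal{L}_{p}(\delta \varepsilon, \varepsilon)} \left|\frac{f}{\rho}\right| \int_M \rho(y)\,G(p,y)\,\chi^2(y)\,dy \\
&\leq \sup_{\mathcal{L}_{p}(\delta \varepsilon, \varepsilon)} \left|\frac{f}{\rho}\right| \int_M \left|\nabla\!\left(\sqrt{G(p,y)}\,\chi(y)\right)\right|^2 dy,
\end{align*}
where in the first step $\chi\equiv 1$ on $\mathcal{L}_p(\delta\varepsilon,\varepsilon)$ and $\rho,G\geq 0$, and in the second step I apply \eqref{wpi} to $v=\sqrt{G(p,\cdot)}\,\chi$ after noting $\rho\,v^2=\rho\,G\,\chi^2$.

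It then remains to bound $\int_M |\nabla(\sqrt{G}\,\chi)|^2$. Expanding,
$$
\int_M \left|\nabla\!\left(\sqrt{G}\,\chi\right)\right|^2 dy \leq \frac12 \int_{\mathcal{L}_p(\frac{\delta\varepsilon}{2},2\varepsilon)} \frac{|\nabla G(p,y)|^2}{G(p,y)}\,dy + 2\int_M G(p,y)\,|\nabla\chi|^2\,dy,
$$
using $|\nabla(\sqrt G\,\chi)|^2\le \tfrac12\tfrac{|\nabla G|^2}{G}\chi^2+2G|\nabla\chi|^2$ pointwise together with $0\le\chi\le1$. The first term equals $\log\!\big(\frac{2\varepsilon}{\delta\varepsilon/2}\big)=\log 4-\log\delta$ by Lemma \ref{lemma2}. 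For the second term, on the support of $\nabla\chi$ one has $|\nabla\chi|^2=\frac{1}{(\log 2)^2}\frac{|\nabla G|^2}{G^2}$, hence $G|\nabla\chi|^2=\frac{1}{(\log2)^2}\frac{|\nabla G|^2}{G}$, and again by the co-area computation of Lemma \ref{lemma2},
$$
2\int_M G(p,y)\,|\nabla\chi|^2\,dy = \frac{2}{(\log 2)^2}\int_{\mathcal{L}_p(\frac{\delta\varepsilon}{2},\delta\varepsilon)\cup\mathcal{L}_p(\varepsilon,2\varepsilon)} \frac{|\nabla G(p,y)|^2}{G(p,y)}\,dy = \frac{2}{(\log2)^2}\,\big(\log 2 + \log 2\big),
$$
which is a constant independent of $\delta,\varepsilon,f$. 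Combining, $\int_M|\nabla(\sqrt G\,\chi)|^2 \le C(-\log\delta)$ for a universal $C$ (absorbing the $\log 4$ and the constant term, legitimate since $-\log\delta>0$ as $0<\delta<1$), which is the claim. The only real subtlety — and the step I would be most careful about — is justifying that $\chi$ (equivalently $\sqrt G\,\chi$) is a legitimate compactly supported Lipschitz test function for \eqref{wpi}, i.e. that the relevant level sets are precompact and that $\sqrt G$ is Lipschitz near $\partial B_a(p)$ away from the pole $p$; this is exactly where Remark \ref{remark101} (or \cite{ms}) is invoked, and it is entirely analogous to the corresponding point in the proof of Proposition \ref{claim2}, only cleaner since $\psi$ is absent.
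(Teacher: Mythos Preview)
Your approach via the global weighted Poincar\'e inequality is more elaborate than necessary and, as written, has two genuine gaps.

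\textbf{Compact support of $\chi$.} Remark \ref{remark101} only shows $\{G(p,\cdot)>s\}\subset B_a(p)$ for $s\ge A\exp(B\,\omega(a))$, i.e.\ for \emph{large} $s$. For small $\varepsilon$ --- precisely the regime used in the proof of Theorem \ref{teo2}, where $\varepsilon=(2A)^{-1}\exp(-B\,\omega(m))\to 0$ --- the set $\{G(p,\cdot)>\tfrac{\delta\varepsilon}{2}\}$ need not be bounded unless $G(p,x)\to 0$ as $r(x)\to\infty$, and the paper explicitly does \emph{not} assume this (see the remark following Theorem \ref{teo2}). Hence $\sqrt{G}\,\chi$ is not in general an admissible test function for \eqref{wpi}; reinstating a spatial cutoff $\psi$ would reintroduce a remainder you cannot kill without completeness of $g_\rho$ (which is what drives Lemma \ref{lemmastoc}).

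\textbf{Absorbing the constant.} Even granting the previous step, your final bound has the form $\tfrac12(-\log\delta)+C_0$ with a fixed $C_0>0$. Since the proposition requires a single $C$ valid for \emph{all} $\delta\in(0,1)$, and $-\log\delta\to 0^+$ as $\delta\to 1^-$, you cannot absorb $C_0$ into $C(-\log\delta)$. At best your argument yields $C(-\log\delta+1)$, i.e.\ the conclusion of Proposition \ref{claim2}, not of Proposition \ref{claim3}.

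The paper's proof avoids both issues by a one-line computation: with this particular weight one has the algebraic identity $\rho\, G = \tfrac14\,|\nabla G|^2/G$, so
\begin{align*}
\left|\int_{\mathcal L_p(\delta\varepsilon,\varepsilon)} G\,f\,dy\right|
&\le \sup_{\mathcal L_p(\delta\varepsilon,\varepsilon)}\left|\frac{f}{\rho}\right|\int_{\mathcal L_p(\delta\varepsilon,\varepsilon)} \rho\,G\,dy
= \frac14\sup_{\mathcal L_p(\delta\varepsilon,\varepsilon)}\left|\frac{f}{\rho}\right|\int_{\mathcal L_p(\delta\varepsilon,\varepsilon)} \frac{|\nabla G|^2}{G}\,dy\\
&= \frac14(-\log\delta)\sup_{\mathcal L_p(\delta\varepsilon,\varepsilon)}\left|\frac{f}{\rho}\right|,
\end{align*}
by Lemma \ref{lemma2}. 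No Poincar\'e inequality, no cutoff, no compactness hypothesis --- and the exact factor $-\log\delta$ appears directly.
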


\begin{proof}
  We have
  \begin{align*}
  \left|\int_{\mathcal{L}_{p}(\delta \varepsilon, \varepsilon)} G(p,y)\,f(y)\,dy \right| &\leq\sup_{\mathcal{L}_{p}(\delta \varepsilon, \varepsilon)} \left|\frac{f}{\rho}\right| \left(\int_{\mathcal{L}_{p}(\delta \varepsilon, \varepsilon)} G(p,y)\,\rho(y)\,dy\right)\\
  &=\frac{1}{4}\sup_{\mathcal{L}_{p}(\delta \varepsilon, \varepsilon)} \left|\frac{f}{\rho}\right| \left(\int_{\mathcal{L}_{p}(\delta \varepsilon, \varepsilon)} \frac{|\nabla G(p,y)|^2}{G(p,y)}\,dy\right)\\
   &=\frac{1}{4}\left(-\log\delta \right)\sup_{\mathcal{L}_{p}(\delta \varepsilon, \varepsilon)} \left|\frac{f}{\rho}\right|,
  \end{align*}
  where we have used Lemma \ref{lemma2} in the last equality.
\end{proof}

\

\section{Proof of Theorem \ref{teo1}} \label{sec-proofs}

In order to prove Theorem \ref{teo1}, we will show that
$$
|u(x)|=\left| \int_{M}G(x,y)f(y)\,dy \right| \leq v(x),
$$
with $v\in C^{0}(M)$. We divide the proof in two parts, we first consider the case when $(M,g)$ is non-parabolic and then the case when it is parabolic.

\begin{proof}[Proof of Theorem \ref{teo1}] {\bf Case 1:} {\em  $(M,g)$ non-parabolic.}

\

By assumption, $(M,g)$ satisfies $\left(\mathcal{P}_{\rho_R}^\infty\right)$. Let $x\in M$ and choose $R=R(x)>R_0$ large enough so that $x\in B_R (p)$. One has

\begin{align*}
\left|\int_M G(x,y)\,f(y)\, dy\right| &\leq \left| \int_{B_R(p)} G(x,y)\,f(y)\,dy \right|+\left|\int_{M\setminus B_R(p)} G(x,y)\,f(y)\,dy\right|\\
&\leq C_1(x) + \int_{M\setminus B_R(p)} G(x,y)\,|f(y)|\,dy
\end{align*}
since $G(x,\cdot)\in L^1_{\text{loc}}(M)$.  Hence, by Harnack's inequality, we have
\begin{align}\label{eq501}
\left|\int_M G(x,y)\,f(y)\, dy\right| &\leq C_1(x) + C_2(x)\int_{M\setminus B_R(p)} G(p,y)\,|f(y)|\,dy \\ \nonumber
&\leq C_1(x) + C_2(x)\int_{M} G(p,y)\,|f(y)|\,dy \,,
\end{align}
where $C_2(x)$ can be chosen as the constant in the Harnack's inequality for the ball $B_{r(x)+1}(p)$. Then we estimate

\begin{align*}
 \int_{M}G(p,y)\,|f(y)|\,dy   &= \int_{\mathcal{L}_{p}\left(0, \,A \exp \left(B\, \omega(a)\right)\right)} G(p,y)\,|f(y)|\,dy  \\
 &\,\,\,+  \int_{\mathcal{L}_{p}\left(A \exp \left(B\, \omega(a)\right),\infty\right)} G(p,y)\,|f(y)|\,dy \,.
\end{align*}
By Proposition \ref{lemma3}, Remark \ref{remark101}  we get
\begin{align}\label{eq128}
 \int_{M}G(p,y)\,|f(y)|\,dy   &\leq \int_{\mathcal{L}_{p}\left(0, A \exp \left(B\, \omega(a)\right)\right)} G(p,y)\,|f(y)|\,dy  + C_3(a)
\end{align}
for some positive constant $C_3(a)$. To estimate the first integral, we observe that, for any $m_{0}=m_{0}(x)\geq a$ one has
\begin{align}\label{eq129}
 \int_{\mathcal{L}_{p}\left(0, \,A \exp \left(B\, \omega(a)\right)\right)} &G(x,y)\,|f(y)|\,dy  =  \int_{\mathcal{L}_{p}\left(0, \,(2A)^{-1}\exp(-B\omega(m_{0}))\right)}G(x,y)\,|f(y)|\,dy \nonumber \\
&\quad+ \int_{\mathcal{L}_{p}\left((2A)^{-1}\exp(-B\omega(m_{0})),\,A \exp \left(B\, \omega(a)\right)\right)}G(x,y)\,|f(y)|\,dy \,.
\end{align}
We need the following lemma.
\begin{lemma}\label{lemma5} Choose $A,B$ as in Lemma \ref{lemma1}. For any $m\geq m_0\geq a$ one has
\begin{equation}\label{eq400}\mathcal{L}_{p}\left(0, A^{-1}\exp(-B\omega(m))\right) \subset M \setminus B_m(p).
\end{equation}
\end{lemma}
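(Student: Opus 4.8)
The plan is to establish the contrapositive inclusion
$$
B_m(p)\ \subset\ M\setminus\mathcal{L}_{p}\big(0,\,A^{-1}\exp(-B\omega(m))\big),
$$
i.e. to show that every $y\in B_m(p)$ satisfies $G(p,y)\geq A^{-1}\exp(-B\omega(m))$; since $G(p,\cdot)>0$ on $M$, this is equivalent to \eqref{eq400}. Throughout I will use that, by \eqref{eq127}, $\omega(a)=0$, $\omega$ is increasing and nonnegative on $[a,\infty)$, and $B=2n(n-1)>0$, so that $\exp(-B\omega(t))\leq 1$ for $t\geq a$ and $\omega(r(y))\leq\omega(m)$ whenever $a\leq r(y)\leq m$.

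First I would treat the points $y\in B_m(p)\setminus B_a(p)$, for which $a\leq r(y)\leq m$. Here the lower bound in Lemma \ref{lemma1} applies directly, giving
$$
G(p,y)\ \geq\ A^{-1}\exp\big(-B\,\omega(y)\big)\ =\ A^{-1}\exp\big(-B\,\omega(r(y))\big)\ \geq\ A^{-1}\exp\big(-B\,\omega(m)\big),
$$
the last step by monotonicity of $\omega$ and $r(y)\leq m$. Next I would handle the remaining points $y\in \overline{B_a(p)}$, where Lemma \ref{lemma1} is not available: there $G(p,\cdot)$ is positive and harmonic on $B_a(p)\setminus\{p\}$ and blows up at $p$, so by the minimum principle $\inf_{B_a(p)}G(p,\cdot)=\min_{\partial B_a(p)}G(p,\cdot)\geq A^{-1}$ by the very definition of $A$ in Lemma \ref{lemma1} (for $y=p$ there is nothing to prove since $G(p,p)=+\infty$). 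Since $m\geq a$ forces $\exp(-B\omega(m))\leq 1$, we again get $G(p,y)\geq A^{-1}\geq A^{-1}\exp(-B\omega(m))$.

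Combining the two cases shows that no $y\in B_m(p)$ lies in $\mathcal{L}_{p}\big(0,A^{-1}\exp(-B\omega(m))\big)$, which is precisely \eqref{eq400}. There is essentially no obstacle here; the only point requiring a little care is that Lemma \ref{lemma1} controls $G(p,\cdot)$ only outside $B_a(p)$, so on the inner ball one must argue separately via the minimum principle together with the definition of the constant $A$.
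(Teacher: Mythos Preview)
Your proof is correct and follows essentially the same idea as the paper: both rely on the lower bound of Lemma~\ref{lemma1} combined with the monotonicity of $\omega$. The only cosmetic difference is that you argue by contrapositive and handle the inner region $\overline{B_a(p)}$ explicitly via the minimum principle and the definition of $A$, whereas the paper first asserts $\mathcal{L}_p\big(0,A^{-1}\exp(-B\omega(m))\big)\subset M\setminus B_a(p)$ (its display~\eqref{eq300}) and then applies Lemma~\ref{lemma1} directly; your treatment of the inner ball is in fact a bit more transparent than the paper's citation of Remark~\ref{remark101} at that step.
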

\begin{proof}
Since $m_{0}\geq a$, by Remark \ref{remark101} imply
\begin{equation}\label{eq300}
\mathcal{L}_{p}\left(0, A^{-1}\exp(-B\omega(m_{0}))\right) \subset\mathcal{L}_{p}\left(0, A^{-1} \exp \left(-B\, \omega(a)\right)\right)\subset M\setminus B_{a}(p) .
\end{equation}
If
$$
z\in \mathcal{L}_{p}\left(0, A^{-1}\exp(-B\omega(m))\right) \subset M\setminus B_{a}(p) \,,
$$ then by Lemma \ref{lemma1}
$$
A^{-1}\exp(-B\omega(m)) \geq G(p,z) \geq A^{-1}\exp(-B\omega(z)) \,.
$$
Thus,
$$
\omega(z)\geq \omega(m)
$$
and, by monotonicity of $\omega$, we obtain $r(z)\geq m$.
\end{proof}
In particular, we get
$$
\mathcal{L}_{p}\left(0, (2A)^{-1}\exp(-B\omega(m_{0}))\right) \subset \mathcal{L}_{p}\left(0, A^{-1}\exp(-B\omega(m_{0}))\right) \subset M\setminus B_{m_0}(p).
$$
Thus,
$$
\mathcal{L}_{p}\left((2A)^{-1}\exp(-B\omega(m_{0})),\,A \exp \left(B\, \omega(a)\right)\right) \subset B_{m_{0}}(p)
$$
Then,  since $G(x,\cdot)\in L^1_{\text{loc}}(M)$, we get
\begin{align}\label{eq130}
\int_{\mathcal{L}_{p}\left((2A)^{-1}\exp(-B\omega(m_{0})),\,A \exp \left(B\, \omega(a)\right)\right)}G(x,y)\,|f(y)|\,dy \leq C_4(a,m_0).
\end{align}
Now, for any $m\geq m_{0}$, let
\begin{equation}\label{11}
  \varepsilon:=(2A)^{-1}\exp(-B\omega(m)),\quad\quad\delta:=\exp(B\omega(m)-B\omega(m+1)).
\end{equation}
By Lemma \ref{lemma5},
$$
\mathcal{L}_p(0,2\varepsilon) \subset M\setminus B_m(p).
$$
Hence we can apply Proposition \ref{claim2} obtaining
\begin{align}\label{eq201}
&\int_{\mathcal{L}_{p}\left(0, (2A)^{-1}\exp(-B\omega(m_{0}))\right)}G(x,y)\,|f(y)|\,dy  \\\nonumber
&= \sum_{m\geq m_{0}} \int_{\mathcal{L}_{p}\left((2A)^{-1}\exp(-B\omega(m+1)), (2A)^{-1}\exp(-B\omega(m))\right)}G(x,y)\,|f(y)|\,dy  \\\nonumber
&\leq C \sum_{m\geq m_{0}}^{\infty}\left(\omega(m+1)-\omega(m)+1\right)\sup_{\mathcal{L}_{p}\left((2A)^{-1}\exp(-B\omega(m+1)), (2A)^{-1}\exp(-B\omega(m))\right)}\left|\frac{f}{\rho_m}\right|\\ \nonumber
&\leq C \sum_{m\geq m_{0}}^{\infty}\left(\omega(m+1)-\omega(m)+1\right)\sup_{\mathcal{L}_{p}\left(0, A^{-1}\exp(-B\omega(m))\right)}\left|\frac{f}{\rho_m}\right|\\ \nonumber
&\leq C \sum_{m\geq m_{0}}^{\infty}\left(\omega(m+1)-\omega(m)+1\right)\sup_{M \setminus B_m(p)}\left|\frac{f}{\rho_m}\right| <\infty \,,
\end{align}
where in the last inequality we used Lemma \ref{lemma5}. The proof of Theorem \ref{teo1} is complete in this case.

\

\noindent {\bf Case 2:} {\em  $(M,g)$ parabolic.}

\

Let $G(x,y)$ be a Green's function on $M$ (which is positive inside a certain ball, and negative outside). Fix any $R>0$ and let $\rho	\equiv\rho_{R_0}$. Note that, arguing as in the proof of \eqref{eq501}, it is sufficient to estimate
\begin{align*}
 \int_{M}|G(p,y)||f(y)|\,dy &= \int_{M\setminus B^\rho_{R}(p)}|G(p,y)||f(y)|\,dy + \int_{B^\rho_{R}(p)}|G(p,y)||f(y)|\,dy \\
&\leq \int_{M\setminus B^\rho_{R}(p)}|G(p,y)||f(y)|\,dy+  C,
\end{align*}
since $G(p,\cdot)\in L^{1}_{\rm{loc}}(M)$ and $f$ is locally bounded. We have that
$$
M\setminus B^\rho_{R}(p) = \bigcup_{i=1}^{N} E_{i},
$$
where each $E_{i}$ is an end with respect to $B^\rho_{R}(p)$. Note that every end $E_{i}$ is parabolic. In fact, if at least one end $E_{i}$ is non-parabolic, then $(M,g)$ is non-parabolic (see \cite{li} for a nice overview), but we are in the case that $(M,g)$ is parabolic. Since every $E_{i}$ is parabolic, every $E_{i}$ has finite weighted volume (see \cite{liwa2}), i.e.
$$
\int_{E_i} \rho\,dy < \infty \,.
$$
Now choose $R$ large enough so that we can apply Lemma \ref{lemmastoc} obtaining
\begin{align*}
&\int_{M\setminus B^\rho_{R}(p)}|G(p,y)||f(y)|\,dy \\
&\qquad\leq \left(\int_{M\setminus B^\rho_{R}(p)}\rho (y)|G(p,y)|^{2}\,dy\right)^{\frac{1}{2}}\left(\int_{M\setminus B^\rho_{R}(p)}\rho(y)\left(\frac{|f(y)|}{\rho(y)}\right)^{2}\,dy\right)^{\frac{1}{2}}\\
&\qquad\leq C\, \sup_{M\setminus B_{R_{0}}(p)} \left|\frac{f}{\rho}\right| \int_{M\setminus B^\rho_{R}(p)}\rho\,dy <\infty\,.
\end{align*}
This concludes the proof of Theorem \ref{teo1}.

\end{proof}

\begin{proof}[Proof of Theorem \ref{teo2}] We start as in the proof of Theorem \ref{teo1} using \eqref{eq501}, \eqref{eq128}, \eqref{eq129} and \eqref{eq130}. Then, similar to \eqref{eq201}, using Proposition \ref{claim3}, we obtain 
\begin{align*}
&\int_{\mathcal{L}_{p}\left(0, (2A)^{-1}\exp(-B\omega(m_{0}))\right)}G(x,y)\,|f(y)|\,dy  \\\nonumber
&= \sum_{m\geq m_{0}} \int_{\mathcal{L}_{p}\left((2A)^{-1}\exp(-B\omega(m+1)), (2A)^{-1}\exp(-B\omega(m))\right)}G(x,y)\,|f(y)|\,dy  \\
&\leq C \sum_{m\geq m_{0}}^{\infty}\left(\omega(m+1)-\omega(m)\right)\sup_{M \setminus B_m(p)}\left|\frac{f}{\rho}\right| <\infty \,,
\end{align*}
Then
$$
\left| \int_{M}G(x,y)f(y)\,dy \right| <\infty
$$
and the proof of Theorem \ref{teo2} is complete.
\end{proof}
\

\section{Cartan-Hadamard and model manifolds} \label{sec-ex}

We consider Cartan-Hadamard manifolds, i.e.~complete, non-compact, simply connected Riemannian manifolds with non-positive
sectional curvatures everywhere. Observe that on Cartan-Hadamard manifolds the cut locus of any point $p$ is empty.
Hence, for any $x\in M\setminus \{p\}$ one can define its polar coordinates with pole at $p$, namely $r(x) = \operatorname{dist}(x, p)$ and $\theta\in \mathbb S^{n-1}$. We have
\begin{equation*}
\textrm{meas}\big(\partial B_{r}(p)\big)\,=\, \int_{\mathbb S^{n-1}}A(r, \theta) \, d\theta^1d \theta^2 \ldots d\theta^{n-1}\,,
\end{equation*}
for a specific positive function $A$ which is related to the metric tensor \cite[Sect. 3]{gri1}. Moreover, it is direct to see that the Laplace-Beltrami operator in polar coordinates has the form
\begin{equation*}
\Delta \,=\, \frac{\partial^2}{\partial r^2} + m(r, \theta) \, \frac{\partial}{\partial r} + \Delta_{\theta} \, ,
\end{equation*}
where $m(r, \theta):=\frac{\partial }{\partial r}(\log A)$ and $ \Delta_{\theta} $ is the Laplace-Beltrami operator on $\partial B_{r}(p)$.  We have
$$
m(r,\theta) =\Delta r(x).
$$

Let $$\mathcal A:=\left\{f\in C^\infty((0,\infty))\cap C^1([0,\infty)): \, f'(0)=1, \, f(0)=0, \, f>0 \ \textrm{in}\;\, (0,\infty)\right\} .$$ We say that $(M,g)$ is a rotationally symmetric manifold or a model manifold if the Riemannian metric is given by
\begin{equation*}\label{e2}
g \,=\, dr^2+\varphi(r)^2 \, d\theta^2,
\end{equation*}
where $d\theta^2$ is the standard metric on $\mathbb S^{n-1}$ and $\varphi\in \mathcal A$. In this case,
\begin{equation*}
\Delta \,=\, \frac{\partial^2}{\partial r^2} + (n-1) \, \frac{\varphi'}{\varphi} \, \frac{\partial}{\partial r} + \frac1{\varphi^2} \, \Delta_{\mathbb S^{n-1}} \, .
\end{equation*}
% where $\Delta_{\mathbb S^{N-1}}$ is the Laplace-Beltrami operator in $\mathbb S^{N-1}\,.$
Note that $\varphi(r)=r$ corresponds to $M=\mathbb R^n$, while $\varphi(r)=\sinh r$ corresponds to $ M=\mathbb H^n $, namely the $n$-dimensional hyperbolic space. The Ricci curvature in the radial direction is given by
$$
\ricc( \nabla r, \nabla r) (x) = -(n-1)\frac{\varphi''(r(x))}{\varphi(r(x))}.
$$

\subsection{Cartan-Hadamard manifolds} Concerning the validity of the property $\left(\mathcal{P}_\rho^\infty\right)$ on a Cartan-Hadamard manifold we have the following result.
\begin{lemma}\label{lemma-peso}
Let $(M,g)$ be a Cartan-Hadamard manifold with
$$
\ricc( \nabla r, \nabla r) (x)\leq -C\big(1+r(x)\big)^{\gamma}
$$
for some $\gamma\in \RR$, $C>0$ and any $x\in M\setminus\{p\}$. Then $(M,g)$ satisfies the property $\left(\mathcal{P}_{\rho_{R}}^\infty\right)$ with
$$
\rho_R(x) = \begin{cases} 
C'\, r(x)^{\gamma} &\quad\hbox{if } \gamma\geq -2  \\
C'\, r(x)^{-2} &\quad\hbox{if } \gamma < -2 \end{cases}
$$
for all $R>0$ large enough and some $C'>0$.
\end{lemma}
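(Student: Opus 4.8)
The plan is to establish the two ingredients in the definition of property $\left(\mathcal{P}_{\rho_R}^\infty\right)$ separately: first the weighted Poincar\'e inequality at infinity with the stated weight $\rho_R$, and then the completeness of the conformal metric $g_{\rho_R}=\rho_R\,g$. For the Poincar\'e inequality, the key observation is that on a Cartan-Hadamard manifold with $\ricc(\nabla r,\nabla r)(x)=-(n-1)\frac{\varphi''(r)}{\varphi(r)}\leq -C(1+r)^\gamma$, one has a lower bound on the mean curvature of geodesic spheres, namely $\Delta r=m(r,\theta)\geq \varphi'(r)/\varphi(r)\cdot(n-1)$-type control; more precisely, integrating the Riccati inequality for $\varphi''/\varphi\geq \tilde C(1+r)^\gamma$ forces $\varphi'/\varphi$ (and hence $\Delta r$) to grow at least like $(1+r)^{\gamma/2}$ when $\gamma\geq -2$, and at least like $c/r$ always (the latter being the Cartan-Hadamard baseline $\Delta r\geq (n-1)/r$). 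First I would make this quantitative: show that $\Delta r(x)\geq c\,(1+r(x))^{\gamma/2}$ when $\gamma\geq -2$ and $\Delta r(x)\geq c/r(x)$ when $\gamma<-2$, for $r$ large.

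Given such a bound $\Delta r\geq 2\sqrt{\rho_R(r)}$ (with $\rho_R$ as in the statement, up to adjusting the constant $C'$), the weighted Poincar\'e inequality follows from the standard ground-state/vector-field substitution trick: for $v\in C^\infty_c(M\setminus B_R(p))$ write
\begin{align*}
\int_M \rho_R\,v^2\,dV &= \int_M |\nabla r_{\rho_R}|^2\,v^2\,dV,
\end{align*}
integrate by parts using $\Delta r_{\rho_R}=\frac{\Delta r}{2\sqrt{\rho_R}}+\frac{\langle\nabla r,\nabla\rho_R\rangle}{\cdots}\geq \sqrt{\rho_R}$ (the correction term from differentiating $\rho_R$ along $\nabla r$ is lower order and has a favorable sign for $\gamma\leq 0$, while for $\gamma>0$ it only helps), and then apply Cauchy-Schwarz to absorb the cross term, exactly as in the classical proof of Hardy's inequality. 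This yields \eqref{wpi2} for $\rho\equiv\rho_R$ on $C^\infty_c(M\setminus B_R(p))$ once $R$ is large enough that the mean-curvature lower bound is in force.

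For the completeness of $g_{\rho_R}=\rho_R\,g$, I would check that the $\rho_R$-length of any ray going to infinity diverges: the radial $\rho_R$-length of $\{r\geq R\}$ is $\int_R^\infty \sqrt{\rho_R(r)}\,dr\sim \int_R^\infty (1+r)^{\gamma/2}\,dr$ when $\gamma\geq -2$, which diverges precisely because $\gamma/2\geq -1$; and when $\gamma<-2$ one has $\sqrt{\rho_R}\sim c/r$, whose integral also diverges. Combined with completeness of $g$ on the compact part $B_{R_0}(p)$ (where $\rho_R$ is bounded below by a positive constant), this gives completeness of $g_{\rho_R}$. The main obstacle I anticipate is the first step: extracting the sharp power $(1+r)^{\gamma/2}$ (rather than something weaker) for $\Delta r$ from only a one-sided curvature bound, since the Riccati comparison must be run carefully — one compares $\varphi$ with the solution of $\psi''=\tilde C(1+r)^\gamma\psi$ and uses that for $\gamma\geq -2$ this model solution has $\psi'/\psi\sim \sqrt{\tilde C}\,(1+r)^{\gamma/2}$ by a WKB/Liouville-transformation estimate, while the threshold $\gamma=-2$ is exactly where this asymptotic degenerates into the $1/r$ behavior, explaining the case split in the statement.
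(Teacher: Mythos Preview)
Your plan is essentially correct and follows the paper's strategy: obtain a lower bound $\Delta r\ge c\,r^{\gamma/2}$ (resp.\ $c/r$) via Laplacian comparison on a Cartan--Hadamard manifold, convert this into the weighted Poincar\'e inequality by the integration-by-parts/Hardy trick with Cauchy--Schwarz, and then check completeness of $g_{\rho_R}$ by showing $\int^\infty\sqrt{\rho_R}\,dr=\infty$.

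Two differences are worth noting. First, the ``main obstacle'' you anticipate is not one: rather than analyzing $\psi''=\tilde C(1+r)^\gamma\psi$ via WKB, the paper simply writes down explicit model functions $\varphi(r)=\exp\big(Br^{1+\gamma/2}\big)$ for $\gamma>-2$, $\varphi(r)=r^\delta$ for $\gamma=-2$, and $\varphi(r)=r$ for $\gamma<-2$, checks $\varphi''/\varphi$ by hand, and applies the strong-form Laplacian comparison valid on Cartan--Hadamard manifolds (cf.\ \cite[Theorem 2.15]{xin}) to read off $\Delta r\ge (n-1)\varphi'/\varphi$; this avoids any asymptotic ODE analysis. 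Second, the paper does \emph{not} run the direct Hardy argument uniformly in $\gamma$: for $\gamma\le 0$ it proceeds exactly as you propose, but for $\gamma>0$ it instead uses a Barta-type bound $\lambda_1(M\setminus B_R)\ge CR^\gamma$ to get an unweighted Poincar\'e inequality on each exterior $M\setminus B_R$, and then a partition of unity over unit annuli to assemble the weight $r^\gamma$. Your uniform integration-by-parts route in fact works for $\gamma>0$ too, so it is arguably more economical---but note your sign remark is inverted: the correction term coming from $\partial_r\rho_R$ has the \emph{favorable} sign precisely when $\gamma>0$; for $-2<\gamma\le 0$ it has the wrong sign but is genuinely lower order ($r^{\gamma/2-1}$ versus $r^\gamma$) and can be absorbed once $R$ is large.
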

\begin{rem} As it will be clear from the proof, we have a weighted Poincar\'e inequality on $M$ if $\gamma \leq 0$ and a the weighted Poincar\'e inequality for functions with compact support in $M\setminus B_1(p)$ if $\gamma>0$.  
\end{rem}
\begin{proof} We can find $\varphi\in \mathcal{A}$ given by
\begin{equation}\label{15}
\varphi(r)=
\begin{cases}
\exp\big(B\,r^{1+\frac{\gamma}{2}}\big) &\quad\hbox{if } \gamma>-2 \\
r^\delta &\quad\hbox{if } \gamma=-2 \\
r &\quad\hbox{if } \gamma<-2
\end{cases}
\end{equation}
for $r$ large enough, $B>0$ small, $\delta=\delta(C)>1$ such that $\ricc( \nabla r, \nabla r) (x) \leq -\frac{\varphi''(r(x))}{\varphi(r(x))}$. By the Laplacian comparison in a strong form, which is valid only on Cartan-Hadamard manifolds (see \cite[Theorem 2.15]{xin}), one has
$$
\Delta r(x) \geq
\begin{cases}
C\, r(x)^{\gamma/2} &\quad\hbox{if } \gamma\geq-2 \\
C r(x)^{-1} &\quad\hbox{if } \gamma<-2 \,.
\end{cases}
$$
Suppose $\gamma\leq 0$ and let $\alpha:=\max\{\gamma,-2\}\leq 0$. For any $u\in C^\infty_c (M)$, since $|\nabla r|^2=1$, we have
\begin{align*}
&C \int_M r(y)^\alpha \,u(y)^2\,dy\\
&\qquad\leq \int_M u(y)^2 r(y)^{\alpha/2} \Delta r (y)\,dy \\
&\qquad= -2 \int_M \langle \nabla u, \nabla r\rangle u(y) r(y)^{\alpha/2}\,dy + \frac{\alpha}{2} \int_M u(y)^2 r(y)^{\alpha/2-1} |\nabla r(y)|^2\,dy \\
&\qquad\leq 2 \int_M |u(y)| |\nabla u(y)| r(y)^{\alpha/2}\,dy\\
&\qquad\leq \frac{C}{2} \int_M r(y)^\alpha \,u(y)^2\,dy + \frac{2}{C} \int_M |\nabla u(y)|^2\,dy \,.
\end{align*}
Thus
$$
\int_M r(y)^\alpha \,u(y)^2\,dy \leq \frac{4}{C^2}  \int_M |\nabla u(y)|^2\,dy
$$
and the weighted Poincar\'e inequality on $M$ follows in this case.

Suppose now $\gamma >0$. By a Barta-type argument (see e.g. \cite[Theorem 11.17]{gri2}),
\[\lambda_1(M\setminus B_R(p)) \geq [C R^{\frac{\gamma}{2}}]^2 \quad \textrm{in}\;\; M\setminus B_R(p)\,. \]
Thus, the Poincar\'e inequality reads
\begin{align}\label{poineq}
C R^\gamma \int_M u(y)^2\,dy \leq \int_M |\nabla u(y)|^2\,dy
\end{align}
for any $u$ with compact support in $M\setminus B_R(p)$. Now let $R>1$ and, for every $k\in\mathbb{N}$, define the cutoff functions 
$$
\varphi_k(x):=\begin{cases} r(x)-k+1, &r(x)\in[k-1,k)\\ k+1-r(x), &r(x)\in[k,k+1)\\ 0 &\text{otherwise}.\end{cases}
$$
Note that $|\nabla \varphi_k|\leq 1$ and for all $x\in M\setminus B_1(p)$, $\sum_k \varphi_k =1$ and $x\in \operatorname{supp}\varphi_k$ at most for two integers $k$. If $\operatorname{supp} u \subset M \setminus B_1(p)$, we have
\begin{align*}
\int_M r(y)^\gamma \,u(y)^2\,dy &= \int_M r(y)^\gamma \,\left(\sum_k \varphi_k (y) u(y)\right)^2\,dy \\
&\leq 2\sum_k \int_M r(y)^\gamma \,\varphi_k (y)^2 u(y)^2\,dy \\
&\leq C\sum_k (k-1)^\gamma \int_M \varphi_k (y)^2 u(y)^2\,dy \\
&\leq C\sum_k \int_M |\nabla\left(\varphi_k (y) u(y)\right)|^2\,dy,
\end{align*}
where in the last passage we used \eqref{poineq} with $R=k-1$. Thus
\begin{align*}
\int_M r(y)^\gamma \,u(y)^2\,dy &\leq C\sum_k \left(\int_M u(y)^2|\nabla \varphi_k (y)|^2\,dy+\int_M \varphi_k(y)^2|\nabla u(y)|^2\,dy\right)\\
&\leq C\int_M u(y)^2\,dy+C\int_M |\nabla u(y)|^2\,dy\\
&\leq C\int_M |\nabla u(y)|^2\,dy,
\end{align*}
where in the last passage we used \eqref{poineq} with $R=1$. Hence the weighted Poincar\'e inequality holds for functions with support in $M\setminus B_1(p)$.

\medskip

Finally, the completeness of the metric $g_{\rho_R}:= {\rho_R}\, g$ follows. In fact, for any curve $\eta(s)$ parametrized by arclength with $0\leq s \leq T$, the length of $\eta$ with respect tp $g_{\rho_R}$ is given by
$$
\int_\eta \sqrt{{\rho_R}}\,ds \to \infty \quad\hbox{as } T\to \infty \,.
$$
\end{proof}

\

\noindent Let us write some estimates which will be useful both in the proof of Corollary \ref{cor-2} and in the last Subsection \ref{ssu}. Choose $\varphi\in\mathcal{A}$ as in \eqref{15} with $\gamma=\gamma_1$ obtaining
$$
\frac{\varphi'(r(x))}{\varphi(r(x))}=\begin{cases} C\,r(x)^{\gamma_1/2} &\quad\hbox{if } \gamma_1\geq -2 \\ C\,r(x)^{-1} &\quad\hbox{if } \gamma_1< -2 \end{cases}
$$
and
$$
\frac{\varphi''(r(x))}{\varphi(r(x))} = \begin{cases} C\,r(x)^{\gamma_1}+C' r(x)^{\gamma_1/2-1} &\quad\hbox{if } \gamma_1\geq -2 \\ 0 &\quad\hbox{if } \gamma_1<-2\, \end{cases}
$$
for $r(x)>R>1$. A simple computation shows that, for $R=r(x)/4$, one has
$$
K_R(x) = \begin{cases} C\, r(x)^{\gamma_1/2} &\quad\hbox{if } \gamma_1\geq -2 \\ 0 &\quad\hbox{if } \gamma_1<-2\,, \end{cases}
$$
$$
\frac{I_R(x)}{R} = \begin{cases} C\, r(x)^{\gamma_1/2-1}\coth\left(C'r(x)^{\gamma_1/2+1}\right) &\quad\hbox{if } \gamma_1\geq -2 \\ \frac{2}{r(x)^2} &\quad\hbox{if } \gamma_1<-2\, \end{cases}
$$
and
$$
Q_R(x) = \begin{cases} C\, r(x)^{\gamma_1} &\quad\hbox{if } \gamma_1\geq -2 \\ \frac{2}{r(x)^2} &\quad\hbox{if } \gamma_1<-2\,. \end{cases}
$$
Thus
$$
\omega(r) = \begin{cases} C\, r^{\gamma_1/2+1} &\quad\hbox{if } \gamma_1\geq -2 \\ C \log r &\quad\hbox{if } \gamma_1<-2\,, \end{cases}
$$
and, as $m\to\infty$,
\begin{equation}\label{asdf}
\omega(m+1)-\omega(m)  \sim\begin{cases} C\, m^{\gamma_1/2} &\quad\hbox{if } \gamma_1\geq -2 \\ C m^{-1} &\quad\hbox{if } \gamma_1<-2\,. \end{cases}
\end{equation}
On the other hand, using Lemma \ref{lemma-peso} with $\gamma=\gamma_2$, we get the estimate
$$
\sup_{M\setminus B_m(p)} \frac{1}{\rho_m} \leq \begin{cases} C\,m^{-\gamma_2} &\quad\hbox{if }\gamma_2\geq -2 \\
C\, m^{2} &\quad\hbox{if } \gamma_2 < -2 \end{cases}\,.
$$

\begin{proof}[Proof of Corollary \ref{cor-2}]  For $\gamma_1\geq \gamma_2$ and $\gamma_1\geq 0$, we get
$$
\sum_{m}^{\infty}\Big(\omega(m+1)-\omega(m)+1\Big)\sup_{M\setminus B_m(p)}\left|\frac{f}{\rho_m}\right| \leq  \begin{cases} C \sum_{m}^{\infty} \,m^{\gamma_1/2-\gamma_2-\alpha} &\quad\hbox{if }\gamma_2\geq -2 \\
C \sum_{m}^{\infty}\, m^{2+\gamma_1/2-\alpha} &\quad\hbox{if } \gamma_2< -2. \end{cases}
$$
and the thesis immediately follows.
\end{proof}

% \\
%C \sum_{m}^{\infty}\, m^{1-\alpha} &\quad\hbox{if } \gamma_1<-2

\subsection{Optimality on rotationally symmetric manifolds}\label{ssu} We show that the assumptions in Theorem \ref{teo2} are sharp on model manifolds. Let $(M,g)$ be a rotationally symmetric manifold with $\varphi\in\mathcal{A}$ defined as in \eqref{15} for any $r>1$. 
%
%
%By Lemma \ref{lemma-peso} $(M,g)$ satisfies the property $\left(\mathcal{P}_{\rho_R}^\infty\right)$. We use 
%$$
%|f(x)|\leq \frac{C}{\big(1+r(x)\big)^{\alpha}}
%$$
%with
%$$
%\alpha>\begin{cases}
%1-\frac{\gamma}{2}&\quad\hbox{if } \gamma\geq-2 \\
%2 &\quad\hbox{if } \gamma< -2\,.
%\end{cases}
%$$
%We show that this condition on $f$ is also necessary for the existence of a solution to the Poisson equation on the model manifold. 
One has
$$
\int_{M}G(x,y)f(y)\,dy<\infty \quad\quad\hbox{for any }\,  x \in M  \quad \Longleftrightarrow \quad \int_{M}G(p,y)f(y) \,dy<\infty .
$$
Hence a solution of $\Delta u = f$ in $M$ exists if and only if
$$
u(p)=\int_{0}^{\infty}\left(\int_{r}^{\infty}\frac{1}{\varphi(t)^{n-1}}dt\right)f(r)\,\varphi(r)^{n-1}\,dr <\infty.
$$

\noindent {\em Case 1:} $\gamma>-2$. With our choice of $\varphi$, by the change of variable $s=t^{1+\frac{\gamma}{2}}$, it is easily seen that, for any $r>0$ sufficiently large
\begin{equation}\label{asd}
\int_{r}^{\infty}\frac{1}{\varphi(t)^{n-1}}dt \sim C r^{-\frac{\gamma}{2}}\exp\left(-(n-1)r^{1+\frac{\gamma}{2}}\right).
\end{equation}
Hence
\begin{align*}
 \frac 1{C} \int_{1}^{\infty} & r^{-\frac{\gamma}{2}}\exp\left(-(n-1)r^{1+\frac{\gamma}{2}}\right) \frac{1}{\big(1+r\big)^{\alpha}}\exp\left((n-1)r^{1+\frac{\gamma}{2}}\right)\,dr  \leq |u(p)|\\&\leq C \int_{1}^{\infty} r^{-\frac{\gamma}{2}}\exp\left(-(n-1)r^{1+\frac{\gamma}{2}}\right) \frac{1}{\big(1+r\big)^{\alpha}}\exp\left((n-1)r^{1+\frac{\gamma}{2}}\right)\,dr
\end{align*}
Therefore,
\begin{align*}
\frac 1 C\int_{1}^{\infty}\frac{1}{r^{\alpha+\frac{\gamma}{2}}}\,dr &\leq |u(p)|\leq C \int_{1}^{\infty}\frac{1}{r^{\alpha+\frac{\gamma}{2}}}\,dr\,.
\end{align*}
This yields that
$$|u(p)|<\infty \quad \textrm{
if and only if} \quad
\alpha>1-\frac{\gamma}{2}.
$$

\

\noindent On the other hand, a direct computation, using \eqref{asd}, shows that
$$
\rho(x)=\frac{|\nabla G(p,x)|^2}{4G^2(p,x)} \sim C r(x)^{\gamma}\,.
$$
Furthermore, from \eqref{asdf}, the assumption of Theorem \ref{teo2} is satisfied if and only if
$$
\alpha>1-\frac{\gamma}{2},
$$
and the optimality follows in this case.

\

\noindent {\em Case 2:} $\gamma=-2$. We have,
\begin{equation}\label{qwe}
\int_{r}^{\infty}\frac{1}{\varphi(t)^{n-1}}dt = C\, r^{-\delta(n-1)+1}\,.
\end{equation}
Thus
\begin{align*}
 \frac 1{C} \int_{1}^{\infty}  r^{-\delta(n-1)+1}\frac{1}{\big(1+r\big)^{\alpha}}\,r^{\delta(n-1)}\,dr  \leq |u(p)|\leq C  \int_{1}^{\infty} r^{-\delta(n-1)+1}\frac{1}{\big(1+r\big)^{\alpha}}\,r^{\delta(n-1)}\,dr
\end{align*}
Therefore,
\begin{align*}
\frac 1 C\int_{1}^{\infty}\frac{1}{r^{\alpha-1}}\,dr &\leq |u(p)|\leq C \int_{1}^{\infty}\frac{1}{r^{\alpha-1}}\,dr\,,
\end{align*}
and
$$|u(p)|<\infty \quad \textrm{
if and only if} \quad
\alpha>2.
$$

\

\noindent On the other hand, a direct computation, using \eqref{qwe}, shows that
$$
\rho(x)=\frac{|\nabla G(p,x)|^2}{4G^2(p,x)} \sim C r(x)^{-2}\,.
$$
Furthermore, from \eqref{asdf}, the assumption of Theorem \ref{teo2} is satisfied if and only if
$$
\alpha>2,
$$
and the optimality follows in this case.

\

\noindent {\em Case 3:} $\gamma<-2$. We have,
\begin{equation}\label{zxc}
\int_{r}^{\infty}\frac{1}{\varphi(t)^{n-1}}dt = C\, r^{2-n}\,.
\end{equation}
Thus
\begin{align*}
 \frac 1{C} \int_{1}^{\infty}  r^{2-n}\frac{1}{\big(1+r\big)^{\alpha}}\,r^{n-1}\,dr  \leq |u(p)|\leq C  \int_{1}^{\infty} r^{2-n}\frac{1}{\big(1+r\big)^{\alpha}}\,r^{n-1}\,dr
\end{align*}
Therefore,
\begin{align*}
\frac 1 C\int_{1}^{\infty}\frac{1}{r^{\alpha-1}}\,dr &\leq |u(p)|\leq C \int_{1}^{\infty}\frac{1}{r^{\alpha-1}}\,dr\,,
\end{align*}
and
$$|u(p)|<\infty \quad \textrm{
if and only if} \quad
\alpha>2.
$$

\

\noindent On the other hand, a direct computation, using \eqref{zxc}, shows that
$$
\rho(x)=\frac{|\nabla G(p,x)|^2}{4G^2(p,x)} \sim C r(x)^{-2}\,.
$$
Furthermore, from \eqref{asdf}, the assumption of Theorem \ref{teo2} is satisfied if and only if
$$
\alpha>2,
$$
and the optimality follows in this last case.

\

\

\begin{ackn} The authors are members of the Gruppo Nazionale per l'Analisi Matematica, la Probabilit\`{a} e le loro Applicazioni (GNAMPA) of the Istituto Nazionale di Alta Matematica (INdAM). The first two authors are supported by the PRIN project ``Variational methods, with applications to problems in mathematical physics and geometry''.
\end{ackn}

\

\

\

\

\end{document}